\newcommand{\jon}[1]{{\color{purple}#1}}
\newtheorem{thm}{Theorem}[section] 
\newtheorem*{thm*}{Theorem} 
\newtheorem{prop}[thm]{Proposition}
\newtheorem{lem}[thm]{Lemma}
\newtheorem{cor}[thm]{Corollary}
\theoremstyle{definition}
\newtheorem{definition}[thm]{Definition}
\newtheorem{expl}[thm]{Example}
\newtheorem{rem}[thm]{Remark}
\newtheorem{speculation}[thm]{Speculation}
\DeclareMathOperator{\R}{\mathbb{R}}
\DeclareMathOperator{\C}{\mathbb{C}}
\DeclareMathOperator{\Z}{\mathbb{Z}}
\DeclareMathOperator{\Div}{\text{Div}}
    \DeclareFontFamily{U}{wncy}{}
    \DeclareFontShape{U}{wncy}{m}{n}{<->wncyr10}{}
    \DeclareSymbolFont{mcy}{U}{wncy}{m}{n}
    \DeclareMathSymbol{\Sha}{\mathord}{mcy}{"58}
\numberwithin{equation}{section}
\renewcommand{\i}{\mathrm{i}}
\DeclareSymbolFont{bbold}{U}{bbold}{m}{n}
\DeclareSymbolFontAlphabet{\mathbbold}{bbold}
\newcommand{\A}{\mathbb{A}}
\newcommand{\ones}{J}
\newcommand{\s}{\text{ss}}
\thanks{JLM is supported by a research development account from Soka University of America. JM is partially supported by the Natural sciences
and Engineering Research Council of Canada (NSERC) grant Ro370A01. TTN is partially supported by an AMS-Simons Travel Grant and gratefully acknowledges the Mathematics and Computer Science Department at Lake Forest College for sponsoring the Overleaf subscription used for this project. NDT is partially supported by the Vietnam National Foundation for Science and Technology Development (NAFOSTED) under grant number 101.04-2023.21}
\keywords{Divisibility relation graphs, partially order sets, graph spectra.}
\subjclass[2020]{Primary 06A07, 05C25, 05C50. }
\begin{document}
\title{On Divisibility relation graphs}
\author[J. Merzel, J. Min\'a\v{c}, T.T. Nguyen, N.D. Tan]{Jonathan L. Merzel, J\'an Min\'a\v{c}, \\ Tung T. Nguyen, Nguy$\tilde{\text{\^{e}}}$n Duy T\^{a}n}

\address{Soka University of America}
\email{jmerzel@soka.edu }

\address{
University of Western Ontario}
\email{minac@uwo.ca}

\address{Lake Forest College}
\email{tnguyen@lakeforest.edu}

\address{Hanoi University of Science and Technology}
\email{tan.nguyenduy@hust.edu.vn}

\date{}

\maketitle
\begin{abstract}
For each positive integer $n$, we define the divisibility relation graph  $D_n$ whose vertex set is the set of divisors of $n$, and  in which two vertices are adjacent if one is a divisor of the other. This type of graph is a special case of graphs associated with a partial order, which have been widely studied in the literature. In this work, we determine various graph-theoretic invariants of divisibility relation graphs, such as their clique and independence numbers, and their planarity. We also discuss various spectral properties that are discovered by our numerical experiments. 

\end{abstract}

\section{Introduction}
The divisor graph $G_n$ on $n$ nodes is the graph whose vertices are $\{1, 2, \ldots, n \}$ and in which edges are $(a,b)$ where either $a \mid b$ or $b \mid a.$ Due to  their arithmetical nature, these graphs have been widely studied in the literature. For example, there exists an extensive line of work investigating the longest simple path on $G_n$ (see \cite{erdHos1995graphe, Melotti, pollington1983there, pomerance1983longest}). Additionally, the determination of the independence and clique numbers of $G_n$ has also attracted significant interest (see, for example, \cite{Rodrigo, Cameron1, McNew}).

While studying gcd-graphs in \cite{MTT5, nguyen2025gcd}, we naturally rediscovered $G_n$. Specifically, in our attempts to understand certain induced structures on these gcd-graphs and to prove several statements about their primeness, we realized the importance of understanding the connection scheme between the given divisor subset. Therefore, quite naturally, for our purposes, we restrict ourselves to the induced subgraph $D_n$ of $G_n$ on the set of divisors of $n$. We remark that the vertices of $D_n$ provide a complete set of representatives for the quotient space $(\mathbb{Z}/n\mathbb{Z})^{\times} \backslash (\mathbb{Z}/n\mathbb{Z})$, where the action of $(\mathbb{Z}/n\mathbb{Z})^{\times}$ on $(\mathbb{Z}/n\mathbb{Z} )$ is via the natural multiplication. Consequently, these vertices also form a complete system of representatives for certain supercharacters on $\mathbb{Z}/n\mathbb{Z}$ (see \cite{fowler2014ramanujan}). It is our hope that the study of divisibility relation graphs will offer further insights into the study of these supercharacters and their associated Ramanujan sums.

We note that the definition of $D_n$ is also introduced in \cite[Definition 19]{divisor_graph_1}.  To start our discussion, let us provide the following formal definition of $D_n.$

\begin{definition}
The divisibility relation graph $D_n$ is the graph with the following data 

\begin{enumerate}
    \item The vertex set of $D_n$ is the set of all divisors of $n.$
    \item Two vertices $a\neq b$ are adjacent if and only if $a \mid b$ or $b \mid a.$
\end{enumerate}
\end{definition}
The graph $D_n$ is a special case of compatibility graphs; i.e. graphs which are associated with an order (we will explain this interpretation in more detail in \cref{subsec:equivalent}). Therefore, $D_n$ is somewhat more structured than $G_n$. For example, a classical theorem of Dilworth shows that compatibility graphs are perfect (see \cite{Dilworth, Fulkerson}). In particular, $D_n$ is a perfect graph. Furthermore, we can explicitly calculate some of its variants, such as its clique number (and hence the chromatic number since $D_n$ is perfect), its independence number, and its planarity. More surprisingly, we found that the spectrum of $D_n$ has several interesting properties. Although some of them can be explained by number-theoretic or set-theoretic arguments, others are somewhat more mysterious. We now summarize our main results. We refer the reader to the main text for more precise statements.

\begin{thm}
For each positive integer $n$, we denote by $D_n$ the divisibility relation graph associated with $n.$ Let $f_n$ be the characteristic polynomial of $D_n.$ 

\begin{enumerate}
    \item $D_n$ is a planar graph if and only if $n \in \{1,p, p^2, p^3, pq, p^2 q\}$ for some distinct primes $p,q$. 
    \item Let $p,q$ be two distinct primes such that $\gcd(n,pq)=1.$ Then $f_{n}|f_{npq}.$ Furthermore, if there exists a prime $r$ such that $r \mid \mid n$, then $f_{n}^2 \mid f_{npq}.$
    \item $-1$ is always an eigenvalue of $D_n.$
    \item Suppose that $\mu(n)=-1$ and $\Omega(n) \geq 2$ where $\mu$ and $\Omega$ are respectively the Mobius and degree functions. Then $-2$ and $1$ are an eigenvalue of $D_n.$
    \item Suppose that $\mu(n)=1$. Then $0$ is an eigenvalue of $D_n.$
    \item Suppose that $n=pq^a$ where $p,q$ are distinct primes and $a \geq 1$. Then $0$ is an eigenvalue of $D_n$ if and only if $a \equiv 1 \pmod{6}.$
    \item Suppose that $n=p^a q^b$ where $p,q$ are distinct primes and $a,b \geq 1.$ If $a \equiv b \equiv 1 \pmod{6}$ then $0$ is an eigenvalue of $D_n.$
\end{enumerate}
\end{thm}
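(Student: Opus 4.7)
The plan is to exhibit an explicit nonzero element of the kernel of the adjacency matrix $A$ of $D_{p^aq^b}$. Identifying each divisor $p^iq^j$ with the lattice point $(i,j)\in\{0,\dots,a\}\times\{0,\dots,b\}$, two vertices are adjacent precisely when they are distinct and comparable in the product order. Let $\zeta = e^{i\pi/3}$ be a primitive sixth root of unity and set $f(i,j) = \operatorname{Im}(\zeta^{j-i})$. Since $a,b\ge 1$, the value $f(0,1) = \sin(\pi/3) \ne 0$, so $f$ is not identically zero.

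To verify $Af = 0$, I would first rewrite the condition $(Af)(i,j) = 0$ at each vertex by inclusion-exclusion over the principal ideal and filter at $(i,j)$, obtaining the equivalent identity
\[
F^{\le}(i,j) + F^{\ge}(i,j) = 2 f(i,j),
\]
where $F^{\le}(i,j) = \sum_{k\le i,\, l\le j} f(k,l)$ and $F^{\ge}(i,j) = \sum_{k\ge i,\, l\ge j} f(k,l)$. Because $f(i,j) = \operatorname{Im}(\zeta^{j}\zeta^{-i})$, each of these double sums factors as the imaginary part of a product of two independent geometric sums:
\[
F^{\le}(i,j) = \operatorname{Im}\bigl(\overline{A_i}\,B_j\bigr), \qquad F^{\ge}(i,j) = \operatorname{Im}\bigl(\overline{\tilde A_i}\,\tilde B_j\bigr),
\]
with $A_i = \tfrac{1-\zeta^{i+1}}{1-\zeta}$, $\tilde A_i = \tfrac{\zeta^i-\zeta^{a+1}}{1-\zeta}$, and analogous expressions for $B_j, \tilde B_j$ obtained by replacing $a$ with $b$ and $i$ with $j$.

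The crux of the argument is to compute $\overline{A_i}B_j + \overline{\tilde A_i}\tilde B_j$ and check that it collapses to $2+2\zeta^{j-i}$, whose imaginary part is exactly $2 f(i,j)$. Three ingredients drive this collapse: first, $(1-\zeta)(1-\zeta^{-1}) = 2 - 2\cos(\pi/3) = 1$, which clears all denominators; second, the hypothesis $a\equiv b\equiv 1\pmod 6$ gives $\zeta^{a+1} = \zeta^{b+1} = \zeta^{2}$, replacing the corner-term exponents by small constants; and third, $\zeta^3 = -1$ causes the remaining cross terms to cancel in the pairs $\zeta^{j+1} + \zeta^{j-2} = 0$ and $\zeta^{-i-1} + \zeta^{2-i} = 0$. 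Taking imaginary parts then yields the desired equality.

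The main obstacle is guessing the correct ansatz. The demand $(1-\zeta)(1-\zeta^{-1}) = 1$ in fact forces $\zeta$ to be a primitive sixth root of unity, and the required simplification of the corner terms $\zeta^{a+1}, \zeta^{b+1}$ then forces both $a$ and $b$ to be $\equiv 1 \pmod 6$, explaining conceptually why this is the right hypothesis. Once the ansatz is recognized, the remainder of the proof is a direct geometric-sum computation.
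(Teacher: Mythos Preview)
Your argument for part~(7) is correct, and in fact you have constructed the \emph{same} eigenvector the paper does, only packaged far more conceptually. The paper writes the kernel vector of $D_{p^uq^v}$ in blocks, using six column vectors $A,A',B,B',C,C'$ of length $v+1$, each carrying the periodic pattern $0,1,1,0,-1,-1$ and its successive cyclic shifts; these six blocks are precisely the rows $\bigl(\tfrac{2}{\sqrt{3}}\operatorname{Im}(\zeta^{j-i})\bigr)_{0\le j\le v}$ for $i=0,1,\dots,5$. So your $f(i,j)=\operatorname{Im}(\zeta^{j-i})$ and the paper's vector $X$ agree up to the harmless scalar $\sqrt{3}/2$.

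The difference is entirely in the verification. The paper checks $MX=0$ by a case split into six residue classes of $s\bmod 6$, each handled by block-matrix identities among $U,U^T,V$ and the six column blocks. Your route replaces all of that with a single geometric-series computation: the identity $(1-\zeta)(1-\bar\zeta)=1$ clears denominators, the hypothesis $a\equiv b\equiv 1\pmod 6$ forces $\zeta^{a+1}=\zeta^{b+1}=\zeta^{2}$, and $\zeta^{3}=-1$ kills the remaining cross terms, leaving $\overline{A_i}B_j+\overline{\tilde A_i}\tilde B_j=2+2\zeta^{j-i}$ as claimed. This is both shorter and more explanatory---it makes transparent why the period is $6$ and why exactly $a\equiv b\equiv 1\pmod 6$ is the right hypothesis---whereas the paper's block argument, though elementary, obscures this structure.
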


\subsection{Outline}
In \cref{sec:graph theoretic}, we discuss various graph-theoretic properties of $D_n$. Most arguments in this section are quite elementary, which makes it accessible to a wide range of readers. Our main discoveries are in \cref{sec:spectrum} where we discuss some variational properties of the spectrum of $D_n$ when $n$ varies. In particular, we find that the case where $n$ has at most two prime factors has some quite surprising patterns. 

\subsection{Code}
The code that we wrote to generate data and perform experiments with divisibility relation graphs can be found in \cite{nguyen_divisor_graph}.  We remark that we have also verified all statements in this work with various concrete and computable examples. 

\section{Graph theoretic properties of $D_n$} 
\label{sec:graph theoretic}

\subsection{Isomorphism classes and equivalent interpretations}
\label{subsec:equivalent}

For each natural number $n$, we can factor $n$ into a product of powers of distinct prime factors 
\[ n = p_1^{a_1} p_2^{a_2} \cdots p_d^{a_d}. \] 
If we require further that $1 \leq a_1 \leq a_2 \leq  \cdots \leq a_d$ and that $p_i<p_j$ when $i<j$ and $a_i=a_j$ then this factorization is unique. 
\begin{definition}
    We call $(a_1, a_2, \ldots, a_d)$ the factorization type of $n.$ We will write 
    \[ F(n)= (a_1, a_2, \ldots, a_d). \]
Following the convention in \cite{divisor_numbers}, we also define the degree of $n$ to be $\Omega(n)= \sum_{i=1}^d a_i.$
\end{definition}
\begin{rem}
In this article, the term \textit{degree} could refer to two distinct notions: the degree of a natural number $n$, denoted by $\Omega(n)$, and the degree of $n$ considered as a vertex in $D_n$, denoted by $\deg(n)$. It should be clear from the context which one is being referenced.

\end{rem}

We now describe an equivalent definition of $D_n.$ Let $S$ be the following set 
\[ S =\{ (\alpha_1, \alpha_2, \ldots, \alpha_d) \in \Z^d \mid 0 \leq \alpha_i \leq a_i \quad \forall 1 \leq i \leq d \}.\]

\begin{definition}
Let $ \bm{\alpha} = (\alpha_1, \alpha_2, \ldots, \alpha_d)$ and $ \bm{\beta} = (\beta_1, \beta, \ldots, \beta_d)$ be two vectors in $\R^{d}.$ We say that $\bm{\alpha} \leq \bm{\beta}$ if and only if $\alpha_i \leq \beta_i$ for all $1 \leq i \leq d.$ 
\end{definition}
Restricting this partial order on $\R^{d}$ to $S$, we get a partially ordered set $(S,\le)$.  Let $\Div(n)$ be the set of divisors of $n.$ For each $d \in \Div(n)$, $d$ has a unique factorization of the form $\prod_{i=1}^d p_i^{\alpha_i}$ where $0 \leq \alpha_i \leq a_i.$ As a result, we have a canonical a map $\Phi\colon \Div(n) \to S$  defined by $\Phi(d)=(\alpha_1, \ldots, \alpha_d).$ By the fundamental theorem of arithmetic, $\Phi$ is a bijection. Furthermore, $d_1 | d_2$ if and only if $\Phi(d_1) \leq \Phi(d_2).$ We conclude that $D_n$ is ismorphic to the compatibility graph associated with $(S, \leq).$ In particular, the isomorphism class of $D_n$ only depends on the factorization type of $n.$  While this interpretation of $D_n$ may seem almost tautological, it proves useful—especially when constructing the graph using the Python library networkx, as it allows us to avoid explicit factorization of large numbers.

\subsubsection{Equivalent description via lattice theory.}
We can also interpret $D_n$ as a lattice of intermediate subfields of the finite field $\mathbb{F}_{p^n}$ where $p$ is a prime number. Indeed it is a fundamental basic result in theory of finite fields that there is a 1-1 correspondence (see \cite[Theorem 2.6]{lidl1997finite}) 
$$ \{ d \mid \text{such that } d \mid n \} \longleftrightarrow \{\text{all fields } \mathbb{F} \subset \mathbb{F}_{q}: = \mathbb{F}_{p^n} \} $$
$$ d \longrightarrow \mathbb{F}_{p^d} $$

We see therefore that each vertex $d \in D_n$ correspond exactly one subfield $\mathbb{F}_{p^d}$ of $\mathbb{F}_{p^n}$. Furthermore, $a \mid b$ if and only if $\mathbb{F}_{p^a} \subset \mathbb{F}_{p^b}$. Therefore we attain an isomorphism
$$ D_n \cong L(\mathbb{F}_q) := \text{Lattice of intermediate subfields of } \mathbb{F}_q \text{ viewed as a compatibility graph}.$$
\begin{expl}
Let us discuss a concrete example. The lattice of all subfields of $\mathbb{F}_{2^{36}}$ is described in \cref{fig:subfield_lattice}. We remark that we only draw edges between fields $K$ and $L$ where there is no proper subfield $N$, $K \subset N \subset L$ and $K \subset L$. 

Considering the graph structure of $D_{36} \cong L(\mathbb{F}{2^{36}})$, we have the following remark about the degree distribution: the vertices representing the trivial subfield $\mathbb{F}_2$ and the full field $\mathbb{F}_{2^{36}}$ both have degree $8$.  The vertices for $\mathbb{F}_{2^4}$ and $\mathbb{F}_{2^{9}}$ have degree $4$. Finally, the vertices corresponding to $\mathbb{F}_{2^{2}}$, $\mathbb{F}_{2^{3}}$, $\mathbb{F}_{2^{6}}$, $\mathbb{F}_{2^{12}}$, and $\mathbb{F}_{2^{18}}$ each have degree $6$.
\begin{figure}
\label{tikz:fields}
\begin{tikzpicture}[scale=0.8, every node/.style={circle, draw, inner sep=2pt}]

  \node (F1) at (0,0) {$\mathbb{F}_2$};       
  \node (F2) at (-2,2) {$\mathbb{F}_{2^2}$};    
  \node (F3) at (2,2) {$\mathbb{F}_{2^3}$};    
  \node (F4) at (-4,4) {$\mathbb{F}_{2^4}$};    
  \node (F6) at (0,4) {$\mathbb{F}_{2^6}$};     
  \node (F9) at (4,4) {$\mathbb{F}_{2^9}$};     
  \node (F12) at (-2,6) {$\mathbb{F}_{2^{12}}$};   
  \node (F18) at (2,6) {$\mathbb{F}_{2^{18}}$};   
  \node (F36) at (0,8) {$\mathbb{F}_{2^{36}}$};  

  \draw (F1) -- (F2);   
  \draw (F1) -- (F3);   

  \draw (F2) -- (F4);   
  \draw (F2) -- (F6);   

  \draw (F3) -- (F6);   
  \draw (F3) -- (F9);   

  \draw (F4) -- (F12);  

  \draw (F6) -- (F12);  
  \draw (F6) -- (F18);  

  \draw (F9) -- (F18);  

  \draw (F12) -- (F36); 
  \draw (F18) -- (F36); 

\end{tikzpicture}
\caption{Lattice of subfields of $\mathbb{F}_{2^{36}}$.}
\label{fig:subfield_lattice}  
\end{figure}
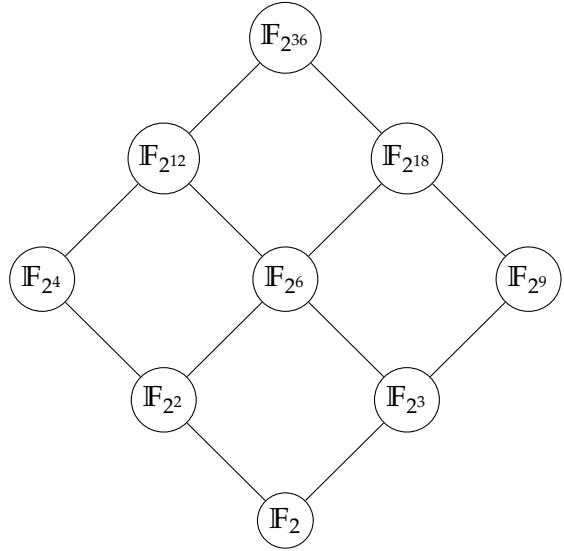

\end{expl}
\bigskip

\subsubsection{The case $n$ is squarefree.}

In the case where $n$ is squarefree, we can give an explicit description of
the adjacency matrix for $D_{n}$. \ 

\begin{prop}
Let $n$ be a squarefree positive integer, written as a product $%
p_{0}p_{1}\cdots p_{k-1}$ of distinct primes. \ Index the rows and columns
of the $2^{k}\times 2^{k}$ adjacency matrix by $\{0,1,\cdots ,2^{k}-1\}$. \
Then, using an appropriate ordering of vertices,  for $0\leq i,j\leq 2^{k}-1
$ the $i,j$ entry of the adjacency matrix for $D_{n}$ is $\binom{i}{j}
+\binom{j}{i}$ mod 2. 
\end{prop}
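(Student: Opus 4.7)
The plan is to identify divisors of the squarefree number $n=p_0p_1\cdots p_{k-1}$ with subsets of $\{p_0,\ldots,p_{k-1}\}$, then with $k$-bit binary strings, then with integers in $\{0,1,\ldots,2^k-1\}$ via binary expansion. Concretely, order the vertices so that the divisor indexed by $i$ with binary expansion $i=\sum_{\ell=0}^{k-1}i_\ell 2^\ell$ is $\prod_{\ell:i_\ell=1}p_\ell$. Under this ordering, divisibility of divisors corresponds exactly to bitwise containment of the indices: $i\mid j$ as divisors if and only if $i_\ell\le j_\ell$ for every bit position $\ell$.

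Next I would invoke Lucas' theorem modulo $2$, which gives
\[
\binom{j}{i}\equiv \prod_{\ell=0}^{k-1}\binom{j_\ell}{i_\ell}\pmod{2}.
\]
Since $\binom{0}{0}=\binom{1}{0}=\binom{1}{1}=1$ and $\binom{0}{1}=0$, the product is $1\pmod 2$ precisely when $i_\ell\le j_\ell$ for all $\ell$, that is, precisely when $i\mid j$ in the divisibility ordering on $\mathrm{Div}(n)$.

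The conclusion is then a case check. If $i=j$, both $\binom{i}{j}$ and $\binom{j}{i}$ are $1$, summing to $0\pmod 2$, which correctly gives a zero diagonal. If $i\ne j$ and one divides the other, exactly one of $\binom{i}{j},\binom{j}{i}$ is odd (the one with the smaller index in the top position is $0$ since $i\ne j$ forces strict inequality of indices when there is containment of bit patterns), so the sum is $1\pmod 2$, matching an edge in $D_n$. If $i\ne j$ and neither divides the other, there exist bit positions where $i_\ell>j_\ell$ and others where $j_\ell>i_\ell$, so both binomial coefficients vanish mod $2$, giving $0$, again matching the non-edge in $D_n$.

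There is no real obstacle: the proof is essentially a dictionary-translation between the poset of divisors of a squarefree integer and the Boolean lattice $\{0,1\}^k$, with Lucas' theorem doing all the arithmetic work. The only point that requires a small amount of care is fixing the vertex ordering at the start so that the correspondence between divisors and integers $0,\ldots,2^k-1$ is well defined; once this is done, the identity $\binom{i}{j}+\binom{j}{i}\equiv[i\ne j\text{ and }(i\mid j\text{ or }j\mid i)]\pmod 2$ is immediate from Lucas.
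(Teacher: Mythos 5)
Your proof is correct and is essentially identical to the paper's: both use the binary-expansion ordering of vertices, Lucas' theorem modulo $2$, and the observation that $\binom{0}{0}=\binom{1}{0}=\binom{1}{1}=1$, $\binom{0}{1}=0$ to translate bitwise containment into divisibility. Your final case analysis just spells out in more detail what the paper leaves as "the result easily follows."
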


\begin{proof}
Order the vertices to correspond to row and column indices as follows: for $%
0\leq i\leq 2^{k}-1$, find the base $2$ representation $i=\sum%
\limits_{r=0}^{k-1}\varepsilon^{(i)} _{r}2^{r}$ ($\varepsilon^{(i)} _{r}\in \{0,1\}$)
and associate to this index the vertex  $\prod\limits_{r=0}^{k-1}{p_r}^{%
\varepsilon^{(i)} _{r}}$. \  Lucas' theorem with modulus 2 then states that %
$\binom {i}{j} \equiv \prod\limits_{r=0}^{k-1} \binom{\varepsilon^{(i)}_r}{\varepsilon^{(j)}_r} \mod2.$  Observing that$\binom{0}{0}=\binom{1}{0}=\binom{1}{1}$ and that$\binom{0}{1}=0$, we see that $\binom{i}{j}=1 \iff \varepsilon^{(i)}_r \ge \varepsilon^{(j)}_r$  for all $r$, and the result easily follows.
\end{proof}

\bigskip




\subsection{Degrees and edges in $D_n$}

\begin{prop}
If the factorization type of $n$ is $(a_{1},\ldots ,a_{d})$ then the graph $%
D_{n}$ has $v:=$ $\prod\limits_{i=1}^{d}(a_{i}+1)$ vertices and $e:=v\left( 
\frac{(a_{1}+2)\cdots (a_{d}+2)}{2^{d}}-1\right) $ edges. 
\end{prop}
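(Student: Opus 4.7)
The plan is to use the equivalent description of $D_n$ via the poset $(S,\le)$ developed in \S\ref{subsec:equivalent}, since both vertex and edge counts become pure multiplicative bookkeeping once we pass to exponent tuples.

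First I would handle the vertex count: by the bijection $\Phi\colon \Div(n)\to S$, the number of vertices equals $|S|=\prod_{i=1}^{d}(a_i+1)$, which is the classical divisor count.

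For the edges, I would count ordered pairs rather than edges directly. Let
\[ N=\#\{(a,b)\in \Div(n)\times\Div(n) : a\mid b\}. \]
Since $a\mid b$ and $b\mid a$ force $a=b$, each unordered edge $\{a,b\}$ of $D_n$ contributes exactly one ordered pair to $N$ (the one in which the smaller divisor comes first), while the $v$ diagonal pairs $(a,a)$ are also counted. Hence $e = N - v$. Translating through $\Phi$, $a\mid b$ corresponds to $\bm{\alpha}\le\bm{\beta}$ componentwise, so the count factors across coordinates:
\[ N = \prod_{i=1}^{d} \#\{(\alpha_i,\beta_i)\in\Z^2 : 0\le\alpha_i\le\beta_i\le a_i\} = \prod_{i=1}^{d}\binom{a_i+2}{2} = \frac{1}{2^d}\prod_{i=1}^{d}(a_i+1)(a_i+2). \]
Factoring $v=\prod(a_i+1)$ out of this product yields $N = v\cdot\dfrac{(a_1+2)\cdots(a_d+2)}{2^d}$, and subtracting $v$ gives the stated formula for $e$.

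There is no real obstacle here; the only thing to be careful about is the double-counting convention, namely ensuring that counting ordered divisibility pairs and then subtracting the diagonal really does produce each edge exactly once. This hinges on the antisymmetry of the divisibility relation on $\Div(n)$, which is immediate. Everything else is a product decomposition over the prime factors, made possible by the fact that divisibility is checked coordinatewise on exponent vectors.
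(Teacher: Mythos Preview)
Your proof is correct and essentially matches the paper's second (alternative) argument: the paper also counts $\sum_{b\mid n} d(b)=\prod_i\binom{a_i+2}{2}$ (your $N$), interprets it as edges-plus-loops, and subtracts $v$. The paper additionally offers a first approach that sums outgoing edges vertex-by-vertex, but your route coincides with its cleaner multiplicative version.
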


\begin{proof}
The vertex count is just the standard result on the number of divisors of an
integer. \ To count edges, we orient the edges by considering an edge
joining vertices $b,c$ to originate at $b$ and terminate at $c$ if $\ b|c.$ 
 \ Then, if for $i=1$,$\ldots ,d$ we have $0\leq x_{i}\leq a_{i}$, the
number of edges originating at $b=\prod\limits_{i=1}^{d}p_{i}^{x_{i}}$ is $%
\prod\limits_{i=1}^{d}(a_{i}+1-x_{i})-1$. \ (We subtract 1 as we do not
include a loop at $b$.) \ So the total number of edges is given by 
\[
\sum\limits_{x_{1}=0}^{a_{1}}\cdots \sum\limits_{x_{d}=0}^{a_{d}}\left(
\prod\limits_{i=1}^{d}(a_{i}+1-x_{i})-1\right) 
\]%
Reindexing by setting $w_{i}=a_{i}+1-x_{i}$, we can rewrite this as%
\begin{eqnarray*}
\sum\limits_{w_{1}=1}^{a_{1}+1}\cdots \sum\limits_{w_{d}=1}^{a_{d}+1}\left(
\prod\limits_{i=1}^{d}w_{i}-1\right) 
&=&\sum\limits_{w_{1}=1}^{a_{1}+1}\cdots
\sum\limits_{w_{d}=1}^{a_{d}+1}\prod\limits_{i=1}^{d}w_{i}-v \\
&=&\left( \sum\limits_{w_{1}=1}^{a_{1}+1}w_{1}\right) \cdots \left(
\sum\limits_{w_{d}=1}^{a_{d}+1}w_{d}\right) -v \\
&=&\frac{(a_{1}+1)(a_{1}+2)}{2}\cdots \frac{(a_{d}+1)(a_{d}+2)}{2}-v= \\
&=&v\left( \frac{(a_{1}+2)\cdots (a_{d}+2)}{2^{d}}-1\right) 
\end{eqnarray*} \\
Alternatively, we could argue as  follows. To count edges, add a loop to each vertex and orient the
edges by viewing an edge connecting $a$ to $b$, where $a|b$, to originate at 
$a$ and terminate at $b$. \ Then the number of edges terminating at $b$ is
the number $d(b)$ of divisors of $b$. \ Thus, the total number of edges in $%
D_{n}$, plus a loop at each vertex, is%
\[
f(n):=\sum\limits_{b|n}d(b)
\]%
which is a multiplicative function. \ Evaluating at a prime power,$$%
f(p^{a})=\sum\limits_{i=0}^{a}d(p^{i})=\sum\limits_{i=0}^{a}(i+1)=\binom{a+2}{2},$$so for $n=p_{1}^{a_{1}}\cdots p_{d}^{a_{d}}$ we have $%
f(n)=\prod\limits_{i=1}^{d} 
\binom{a+2}{2}%
 $, and, subtracting the loops, the number of edges in $D_{n}$ is $%
e:=\prod\limits_{i=1}^{d}\binom{a+2}{2} -v =v\left( \frac{(a_{1}+2)\cdots (a_{d}+2)}{2^{d}}-1\right) .$ 
\end{proof}

\subsubsection{Vertices of Minimal Degree}
\begin{definition}
\ Let $\mathbf e_{i}$ be the unit vector with $0$ in position $j$, $%
1\leq j\ (\neq i)\leq d$, and $1$ in position $i$. \ Vectors $\mathbf{%
x}=(x_{1},\ldots ,x_{d})$ and $\mathbf{x}\pm \mathbf e_{i}$
for some $i$ will be called \underline{close}. \ A sequence $\mathbf{%
x}_{1},\ldots ,\mathbf{x}_{n}$ is a \underline{chain} iff $%
\mathbf{x}_{i}$ and $\mathbf{x}_{i+1}$ are close, $1\leq
i\leq n-1$. 
\end{definition}

\begin{definition}
A vector $\mathbf{x}=(x_{1},\ldots ,x_{d})$ is called \underline{%
extremal in coordinate $i$} if $x_{i}=0$ or $x_{i}=a_{i}$, and is called 
\underline{extremal} if it is extremal in all coordinate positions. The term
nonextremal applies otherwise in both situations.
\end{definition}

\begin{rem}
The degree of a vertex $\mathbf{x}=(x_{1},\ldots ,x_{d})$ is easily
seen to be given by $$f(\mathbf{x}):=\prod%
\limits_{i=1}^{d}(a_{i}+1-x_{i})+\prod\limits_{i=1}^{d}(x_{i}+1)-2.$$ \ For $%
1\leq i\leq d$ let $\Delta _{i}(\mathbf{x})=f(\mathbf{x}+%
\mathbf e_{i})-f(\mathbf{x})=\prod\limits_{j\neq
i}^{{}}(x_{j}+1)-\prod\limits_{j\neq i}^{{}}(a_{j}+1-x_{j})$, and note that $%
\Delta _{i}(\mathbf{x})$ does not depend on the value of $x_{i}$. \
It follows that if $\Delta _{i}(\mathbf{x})<0$, 
\[
\mathbf{x}-x_{i}\mathbf e_{i},\mathbf{x}-(x_{i}-1)%
\mathbf e_{i},\ldots ,\mathbf{x}+(a_{i}-x_{i})%
\mathbf e_{i}
\]%
is a chain through $\mathbf{x}$, joining the vectors $(x_{1},\ldots
,x_{i-1},0,x_{i+1},\ldots ,x_{d})$ and \\ $(x_{1},\ldots
,x_{i-1},a_i,x_{i+1},\ldots ,x_{d})$ extremal in coordinate $i$, with strictly
decreasing degrees. \ Similarly if $\Delta _{i}(\mathbf{x})>0$, we
get such a chain with strictly increasing degrees, and if $\Delta _{i}(%
\mathbf{x})=0$, the chain maintains constant degree. 
\end{rem}

\begin{prop}
A vertex of minimal degree can always be found among the extremal vertices.
\ In fact, any vertex  $\mathbf{x}$ of minimal degree satisfies the "stability condition" $%
\Delta _{i}(\mathbf{x})=0$ at every nonextremal coordinate, and
admits a chain from  $\mathbf{x}$ to an extremal vertex\ of minimal
degree where every member of this chain has minimal degree.
\end{prop}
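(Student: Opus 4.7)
The plan is to leverage the observation in the remark immediately preceding the proposition: for each coordinate $i$, the sign of $\Delta_i(\mathbf{x})$ dictates whether the degree strictly decreases, strictly increases, or stays constant as one moves along the chain varying only $x_i$. I would first establish the stability condition: if $\mathbf{x}$ has minimal degree and $i$ is nonextremal at $\mathbf{x}$ (i.e.\ $0<x_i<a_i$), then $\Delta_i(\mathbf{x})=0$. Indeed, if $\Delta_i(\mathbf{x})<0$ then by the remark $f(\mathbf{x}+\mathbf e_i)<f(\mathbf{x})$, and since $x_i<a_i$ the vector $\mathbf{x}+\mathbf e_i$ is still in $S$, contradicting minimality. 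A symmetric argument using $\mathbf{x}-\mathbf e_i$ together with $x_i>0$ rules out $\Delta_i(\mathbf{x})>0$, giving stability.

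Next I would build the chain by repeatedly pushing nonextremal coordinates to an extreme while keeping the degree equal to its minimum. Suppose $\mathbf{x}$ has minimal degree and $i$ is a nonextremal coordinate. By stability $\Delta_i(\mathbf{x})=0$, and since $\Delta_i$ is independent of $x_i$, the full chain
\[
\mathbf{x}-x_i\mathbf e_i,\ \mathbf{x}-(x_i-1)\mathbf e_i,\ \ldots,\ \mathbf{x}+(a_i-x_i)\mathbf e_i
\]
has constant degree $f(\mathbf{x})$. One endpoint of this chain, namely the vertex $\mathbf{x}'$ obtained from $\mathbf{x}$ by replacing $x_i$ with $0$ (or, equally well, with $a_i$), is extremal in coordinate $i$, has the same minimal degree, and has the same extremality status at every other coordinate as $\mathbf{x}$. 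Induction on the number of nonextremal coordinates, which strictly decreases at each step, then produces a chain of minimal-degree vertices from $\mathbf{x}$ to an extremal vertex of minimal degree, which proves all three assertions simultaneously.

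The one place to be careful is that although $\Delta_i(\mathbf{x})$ is independent of $x_i$, the quantities $\Delta_j(\mathbf{x})$ for $j\ne i$ do depend on $x_i$, so the list of nonextremal coordinates for which stability holds may shift after each step. I do not see this as a real obstacle: at every step the new vertex $\mathbf{x}'$ is itself of minimal degree, so the stability lemma can be reapplied to whichever nonextremal coordinate of $\mathbf{x}'$ we next choose to eliminate. Thus no global coordination of the $\Delta_j$'s is needed; the argument is a purely local one iterated at most $d$ times.
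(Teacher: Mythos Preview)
Your proof is correct and follows essentially the same approach as the paper: establish the stability condition $\Delta_i(\mathbf{x})=0$ at nonextremal coordinates by ruling out both signs via a contradiction with minimality, then iteratively push one nonextremal coordinate at a time to an extreme along a constant-degree chain, reapplying stability at the new minimal vertex. Your explicit treatment of why stability can be reinvoked after each step (since $\Delta_j$ for $j\ne i$ may change) is a clarification the paper leaves implicit in the phrase ``the stability condition is still intact.''
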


\begin{proof}
The argument
in the above remark implies that $\mathbf{x}$, if not alreadly
extremal, must satisfy $\Delta _{i}(\mathbf{x})=0$ at every
nonextremal coordinate (else we can move to a close vertex of lower degree); choosing a nonextremal coordinate position $i$, $\mathbf{x}$ occurs in
a chain with constant degree leading to a vertex differing
only in coordinate $i$ where it is now extremal. \ Since all vertices in this chain are still of minimal degree, the stability condition is still intact. \
Repeat\ with any remaining nonextremal coordinate positions until arriving
at an extremal vertex. \ The construction ensures that the degree is not
changed in this process.
\end{proof}

\begin{prop}
An extremal vertex $\mathbf{x}$ has minimal degree is and only if $%
\prod\limits_{i\in A}(a_{i}+1)+\prod\limits_{j\in B}(a_{j}+1)$ is as small
as possible among extremal vertices, where $A=\{i~|~x_{i}=a_{i}\}$ and $%
B=\{j~|~x_{j}=0\}.$
\end{prop}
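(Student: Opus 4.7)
The plan is to just evaluate the general degree formula $f(\mathbf{x})=\prod_{i=1}^{d}(a_{i}+1-x_{i})+\prod_{i=1}^{d}(x_{i}+1)-2$ at an extremal vertex and observe that it collapses to exactly the quantity in the statement (up to the constant $-2$). Since by the previous proposition the minimum of $f$ over all vertices is attained on an extremal vertex, this will immediately translate the condition ``has minimal degree'' into ``minimizes $\prod_{i\in A}(a_{i}+1)+\prod_{j\in B}(a_{j}+1)$ over extremal vertices''.

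First I would fix an extremal vertex $\mathbf{x}$ and record that, because $\mathbf{x}$ is extremal, the set $\{1,\ldots,d\}$ is the disjoint union $A\sqcup B$ with $A=\{i\mid x_{i}=a_{i}\}$ and $B=\{j\mid x_{j}=0\}$. Next I would examine the two product factors of $f(\mathbf{x})$ coordinate by coordinate. For $i\in A$ the factor $a_{i}+1-x_{i}$ equals $1$ and $x_{i}+1$ equals $a_{i}+1$; for $j\in B$ the factor $a_{j}+1-x_{j}$ equals $a_{j}+1$ and $x_{j}+1$ equals $1$. Thus
\[
f(\mathbf{x})=\prod_{j\in B}(a_{j}+1)+\prod_{i\in A}(a_{i}+1)-2.
\]

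Given this identity, the equivalence is routine: among extremal vertices, $\mathbf{x}$ minimizes $f$ if and only if the sum $\prod_{i\in A}(a_{i}+1)+\prod_{j\in B}(a_{j}+1)$ is as small as possible. Combining with the previous proposition, which guarantees that the global minimum of $f$ is attained on an extremal vertex, finishes the proof.

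There is essentially no obstacle here; the argument is a direct substitution once the indexing sets $A$ and $B$ are introduced. The only thing to be careful about is reading the statement in the right way, namely that ``minimal degree'' is equivalent (via the previous proposition) to minimality among extremal vertices, so the quantified statement really is an equivalence rather than only an implication.
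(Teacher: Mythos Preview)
Your proof is correct and follows essentially the same approach as the paper: both simply evaluate the degree formula $f(\mathbf{x})$ at an extremal vertex to obtain $\prod_{i\in A}(a_i+1)+\prod_{j\in B}(a_j+1)-2$, from which the equivalence is immediate. Your version is slightly more explicit (spelling out the coordinate-by-coordinate evaluation and invoking the previous proposition to pass from ``minimal degree'' to ``minimal among extremal vertices''), but the underlying argument is identical.
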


\begin{proof}
This follows simply from the formula for $f(\mathbf{x})$, which for
an extremal vertex $\mathbf{x}$ computes to $\prod\limits_{i\in
S}(a_{i}+1)+\prod\limits_{j\in T}(a_{j}+1)-2$.
\end{proof}

\subsection{Connectedness and bipartite property.}
The graph $D_n$ is always connected since $1$ and $n$ are adjacent to all other vertices. Furthermore, we have the following property. 
\begin{prop}
    Let $a,b$ be two vertices in $D_n.$ Let $d(a,b)$ be the distance between $a$ and $b.$ Then 
    \[
d(a,b) = 
\begin{cases} 
1 & \text{if } (a,b) \in E(D_n) \\
2 & \text{otherwise.} 
\end{cases}
\]
\end{prop}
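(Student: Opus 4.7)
The plan is to observe that the diameter of $D_n$ is at most $2$ because the vertex $1$ (equivalently, $n$) is a universal neighbor, and then combine this with the trivial lower bound.

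First I would handle the easy direction: if $(a,b) \in E(D_n)$, then by definition $d(a,b) = 1$. So assume $(a,b) \notin E(D_n)$. Since $1 \mid x$ for every divisor $x$ of $n$, the vertex $1$ is adjacent to every other vertex of $D_n$. Consequently, if either $a = 1$ or $b = 1$, then $(a,b)$ would be an edge, contradicting our assumption. Therefore $a \neq 1$ and $b \neq 1$, and both $a$ and $b$ are adjacent to $1$. This produces a path $a - 1 - b$ of length $2$, hence $d(a,b) \le 2$. On the other hand, since $(a,b) \notin E(D_n)$ we have $d(a,b) \geq 2$, giving $d(a,b) = 2$.

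There is essentially no obstacle here; the only subtlety is noting that at least one of the two universal vertices ($1$ or $n$) is available as an intermediate step whenever $a$ and $b$ are non-adjacent, and this is immediate from the definition of $D_n$. One could equally well argue via $n$ in place of $1$: if $(a,b)$ is not an edge then $a \neq n$ and $b \neq n$, and both divide $n$, so $a - n - b$ is a path of length $2$. Either choice makes the conclusion automatic once the forbidden case $a,b \in \{1,n\}$ is ruled out.
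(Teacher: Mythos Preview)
Your proof is correct and follows essentially the same approach as the paper: both use the vertex $1$ as a universal neighbor to exhibit the path $a \to 1 \to b$ when $a$ and $b$ are non-adjacent. You simply add a bit more detail (the lower bound $d(a,b)\ge 2$ and the observation that $a,b\neq 1$), which the paper leaves implicit.
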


\begin{proof}
    If $a$ and $b$ are adjacent then $d(a,b)=1.$ Otherwise, we have the path $a \to 1 \to b$ and hence $d(a,b)=2.$
\end{proof}

\begin{prop}
    Let $G$ be the induced graph on $D_n \setminus \{1, n \}.$ Then, $G$ is connected unless $n=pq$ where $p,q$ are distinct prime numbers; i.e, the factorization type of $n$ is $(1,1).$ 
\end{prop}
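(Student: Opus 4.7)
The plan is to prove the two directions separately and treat the small cases explicitly. For the forward (exceptional) direction, if $n=pq$ for distinct primes $p,q$, then $\Div(n)=\{1,p,q,pq\}$, so $G$ has vertex set $\{p,q\}$ and no edges (since $p\nmid q$ and $q\nmid p$); hence $G$ is disconnected.

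For the converse, I would argue that whenever the factorization type of $n$ is not $(1,1)$, the induced graph $G$ is connected, by induction on the number $d$ of distinct prime factors of $n$. Let me split into cases. When $d=0$ (i.e.\ $n=1$) or $d=1$ with $n=p$, the graph $G$ is empty and vacuously connected. When $d=1$ with $n=p^{a}$, $a\geq 2$, the remaining vertices $p,p^{2},\ldots,p^{a-1}$ form a totally ordered set under divisibility, so $G$ is in fact a complete graph.

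The main case is $d\geq 2$ with $F(n)\neq (1,1)$. Here the key idea is to use the primes dividing $n$ as ``hubs,'' linked pairwise through products of two primes. Concretely, I would establish two claims:
\begin{enumerate}
\item[(i)] For any two distinct prime divisors $p_{i},p_{j}$ of $n$, the product $p_{i}p_{j}$ is a proper nontrivial divisor of $n$, i.e.\ $1<p_{i}p_{j}<n$. Hence $p_{i}p_{j}$ is a vertex of $G$ adjacent to both $p_{i}$ and $p_{j}$.
\item[(ii)] Every vertex $a\in V(G)$ is adjacent in $G$ to at least one prime divisor of $n$.
\end{enumerate}
Claim (i) is immediate when $d\geq 3$ since then $p_{i}p_{j}\leq n/p_{k}<n$ for some third prime $p_{k}\mid n$; when $d=2$, writing $n=p_{1}^{a_{1}}p_{2}^{a_{2}}$, the inequality $p_{1}p_{2}<p_{1}^{a_{1}}p_{2}^{a_{2}}$ is equivalent to $(a_{1},a_{2})\neq (1,1)$, which is exactly our hypothesis. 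Claim (ii) follows by picking any prime $p\mid a$: then $p\mid n$, and $p\neq n$ (since $n$ has at least two prime factors), so $p\in V(G)$ and $p\mid a$ gives the edge.

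Combining (i) and (ii), the subgraph of $G$ induced on the prime divisors of $n$ is connected (any two primes are joined via their product), and every other vertex of $G$ is adjacent to this connected ``prime core,'' so $G$ itself is connected.

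The only subtle point, which I expect to be the sole real obstacle, is the edge inequality in claim (i) when $d=2$: one must use the assumption $F(n)\neq(1,1)$ precisely here, and this is also the place where the exceptional case $n=pq$ is forced to fail. Once this is noted, the remainder of the argument reduces to the elementary observations above.
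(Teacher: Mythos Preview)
Your proof is correct and follows essentially the same approach as the paper: both arguments reduce to showing that the prime divisors of $n$ lie in a single component via the path $p_i \to p_i p_j \to p_j$, and that every other vertex is adjacent to some prime divisor. One small polish: your claim (ii) as stated is false when $a$ is itself prime (then the only prime dividing $a$ is $a$, which gives no edge), but your combining step only applies (ii) to non-prime vertices, so no harm is done; also, what you call ``induction on $d$'' is really just case analysis.
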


\begin{proof}
    If $n=pq$ where $p,q$ are distinct primes then we can see that the induced subgraph on $\{p,q \}$ is the empty graph--that is, a graph with vertices but no edges--and hence it is not connected. Conversely, suppose that the factorization type of $n$ is not $(1,1).$ If $n$ has exactly one prime divisor then the induced subgraph on $D_{n} \setminus \{1, n \}$ is a complete graph and therefore it is connected. Let us now assume that $n$ has at least two distinct prime factors. Since a vertex is adjacent to all its divisors, it is sufficient to show that if $p,q$ are distinct prime divisors of $n$ then $p,q$ are connected. This is clear since we have the path $p \to pq \to q.$
\end{proof}

\begin{prop}
 $D_n$ is bipartite if and only $F(n)=(1)$; i.e, $n$ is a prime number. 
\end{prop}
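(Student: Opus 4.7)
The plan is a standard characterization via forbidden odd cycles: a graph is bipartite if and only if it contains no odd cycle, so it suffices to decide when $D_n$ contains a triangle.

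For the "if" direction, if $n$ is prime then $\Div(n) = \{1, n\}$ and $D_n$ is just the single edge joining $1$ and $n$, which is clearly bipartite.

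For the "only if" direction, I would show the contrapositive: whenever the factorization type of $n$ is not $(1)$, the graph $D_n$ contains a triangle, and hence an odd cycle. I split into two cases based on the number of distinct prime divisors of $n$. If $n = p^a$ for some prime $p$ with $a \geq 2$, then the divisors $1, p, p^2$ all belong to $D_n$; since $1 \mid p$, $1 \mid p^2$ and $p \mid p^2$, they form a triangle. If instead $n$ has at least two distinct prime divisors $p$ and $q$, then $1, p, pq$ are divisors of $n$ and, since $1 \mid p \mid pq$, they again span a triangle in $D_n$. In either case $D_n$ contains an odd cycle and so is not bipartite.

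There is essentially no obstacle; the only subtlety is the degenerate case $n=1$, where $D_1$ consists of a single vertex and is vacuously bipartite, so one should note this implicit convention (the statement restricts attention to $n$ with nonempty factorization type). The argument is a short, direct case analysis, and no further machinery is needed.
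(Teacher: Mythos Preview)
Your proof is correct and follows essentially the same approach as the paper: exhibit a triangle whenever $n$ is not prime. The paper's version is marginally slicker in that it avoids your case split by simply taking any proper divisor $1<d<n$ and observing that $\{1,d,n\}$ is a triangle.
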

\begin{proof}
If $n$ is not a prime number then there exists $1<d<n$ such that $d \mid n.$ In this case $\{1, d, n \}$ forms a triangle in $D_n.$ As a result, $D_n$ cannot be bipartite. 
\end{proof}


\subsection{Clique and independent numbers of $D_n$}

\begin{prop}
    Suppose that $F(n)=(a_1, a_2, \ldots, a_d).$ Let $X$ be a maximal clique in $D_n$. Then
    \[ |X| = 1+ \sum_{i=1}^d a_i = 1+\Omega(n). \] 
    \end{prop}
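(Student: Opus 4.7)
The plan is to translate the clique condition into a statement about chains in the divisor poset and then bound the length of such chains using $\Omega$.

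First, I would observe that by the definition of $D_n$, a subset $X\subseteq \Div(n)$ is a clique if and only if any two elements of $X$ are comparable under divisibility, i.e.\ $X$ is a totally ordered subset (a chain) of the poset $(\Div(n),\mid)$. So the statement reduces to showing that the longest chain in $\Div(n)$ has exactly $1+\Omega(n)$ elements.

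For the upper bound, suppose $d_0\mid d_1\mid\cdots\mid d_k$ with all $d_i$ distinct. Then for each $i$, $d_{i+1}/d_i$ is a nontrivial divisor of $n$, so $\Omega(d_{i+1}/d_i)\geq 1$, and consequently
\[
\Omega(d_k)-\Omega(d_0)=\sum_{i=0}^{k-1}\Omega(d_{i+1}/d_i)\geq k.
\]
Since $0\leq \Omega(d_0)$ and $\Omega(d_k)\leq \Omega(n)$, we conclude $k\leq \Omega(n)$, hence $|X|=k+1\leq 1+\Omega(n)$.

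For the lower bound, I would construct an explicit chain realizing the bound. Writing $n=p_1^{a_1}\cdots p_d^{a_d}$, list the prime factors of $n$ with multiplicity as $q_1,q_2,\ldots,q_{\Omega(n)}$ (so $q_j=p_i$ appears exactly $a_i$ times). Then the divisors
\[
1,\;q_1,\;q_1q_2,\;\ldots,\;q_1q_2\cdots q_{\Omega(n)}=n
\]
form a chain of length $1+\Omega(n)$ in $\Div(n)$, hence a clique of that size in $D_n$. Combined with the upper bound this proves equality. There is essentially no obstacle here; the only care needed is the remark that ``maximal'' in the statement must be read as ``maximum,'' since a clique maximal under inclusion need not be one of maximum size in a general graph, but a maximum clique certainly achieves the cardinality claimed.
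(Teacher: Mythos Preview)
Your proof is correct and follows essentially the same approach as the paper: both arguments bound the clique size above by observing that $\Omega$ strictly increases along a chain of divisors, and both exhibit an explicit chain of length $1+\Omega(n)$ (the paper phrases the construction as an induction on the number of prime factors, while you unroll it directly, but the content is identical). One small remark: your caveat about ``maximal'' versus ``maximum'' is unnecessary here, since $\Div(n)$ is a graded poset with rank function $\Omega$, so every maximal chain has the same length $1+\Omega(n)$ and hence every maximal clique is already a maximum clique.
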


\begin{proof}
We will prove by induction on $d$ that there is a clique of size $1+\Omega(n).$ For $d=1$, $n=p_1^{a_1}$ is a prime power and hence $D_n$ is the complete graph on $a_1+1$ nodes. In this case, $X=V(D_n)$ is a clique of size $1+ \Omega(n).$ Let us assume that the statement is true for $d$. Let us show that it is true for $d+1$ as well. Let us write $n=\prod_{i=1}^{d+1} p_i^{a_i}.$ Let $X=\{h_1, h_2, \ldots, h_{1+\sum_{i=1}^d a_d} \}$ be a clique of size $1+\sum_{i=1}^d a_i$ in $D_{\prod_{i=1}^d p_i^{a_i}}$. We can then check that $X' = \{1, p_{d+1}, \ldots, p_{d+1}^{a_d-1} \} \cup p_{d+1}^{a_d} X$ is a clique of size $1+\sum_{i=1}^{d+1} a_i$ in $D_{\prod_{i=1}^{d+1} p_i^{a_i}}.$

Conversely, we show that a clique in $D_n$ has at most $1 + \Omega(n)$ elements, In fact, let $X=\{h_1, h_2, \ldots, h_q\}$ be a clique, where $h_1 < h_2 <\ldots<h_q.$ We claim that $|X| \leq 1 + \Omega(n).$ Since $X$ is a clique and $h_i<h_{i+1}$, $h_i | h_{i+1}$ for each $1 \leq i \leq q-1$. This shows that $0 \leq \Omega(h_1) < \Omega(h_2)< \ldots < \Omega(h_q) \leq \Omega(n).$ Consequently, we conclude that  $|X| = q \leq 1 +\Omega(n).$
\end{proof}

For the independence number of $D_n$, we have the following result. 
\begin{prop} (See \cite[Theorem 1]{divisor_numbers})
Let $S$ be a subset of $D_n$ consisting of vertices of the same degree $ \lfloor \frac{\Omega(n)}{2} \rfloor$. Then $S$ is a maximal independence set in $D_n.$ In particular, the independence number of $D_n$ is precisely $|S|. $
\end{prop}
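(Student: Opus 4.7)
The plan is to reinterpret independent sets in $D_n$ as antichains in the divisor poset and then invoke the classical de Bruijn--Tengbergen--Kruyswijk theorem on maximum antichains in products of chains. Via the bijection $\Phi$ from \cref{subsec:equivalent}, an independent set in $D_n$ corresponds exactly to an antichain in the product of chains $P := [0, a_1] \times \cdots \times [0, a_d]$ equipped with the coordinatewise order, and the rank of a divisor $d = \prod p_i^{\alpha_i}$ under this identification is precisely $\Omega(d) = \sum_i \alpha_i$. Thus $S$ corresponds to the middle rank level of $P$.

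First I would dispose of the easy direction by checking that $S$ is itself an antichain. If $d_1, d_2 \in S$ with $d_1 \mid d_2$, then $\Phi(d_1) \leq \Phi(d_2)$ coordinatewise, yet the coordinate sums agree; this forces $\Phi(d_1) = \Phi(d_2)$, and hence $d_1 = d_2$. So $S$ is indeed independent.

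The substantive direction is the upper bound on the independence number by $|S|$. For this I would construct a \emph{symmetric chain decomposition} (SCD) of $P$, that is, a partition into chains each of which consists of consecutive ranks symmetric about the middle rank $\Omega(n)/2$. The construction proceeds by induction on $d$: for a single chain $[0, a]$ the decomposition is trivial, and given an SCD $\{C_j\}$ of $P_{d-1} := [0, a_1] \times \cdots \times [0, a_{d-1}]$, one builds an SCD of $P_{d-1} \times [0, a_d]$ by repartitioning each grid $C_j \times [0, a_d]$ into symmetric chains using the standard bracket-matching trick. By symmetry, every symmetric chain in the resulting SCD contains exactly one element at rank $\lfloor \Omega(n)/2 \rfloor$, so the number of chains in the SCD equals $|S|$.

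To conclude, any antichain meets each chain of an SCD in at most one element, so its cardinality is bounded by the number of chains, namely $|S|$. Combined with the first step, this yields that the independence number of $D_n$ equals $|S|$. The main obstacle is the bracket-matching step in the inductive construction of the SCD, which is the nontrivial combinatorial heart of the de Bruijn--Tengbergen--Kruyswijk argument; the rest of the proof is essentially formal bookkeeping.
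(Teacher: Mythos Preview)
Your proposal is correct and is exactly the classical symmetric chain decomposition argument of de Bruijn--Tengbergen--Kruyswijk; the paper does not supply its own proof here but simply cites that theorem, so your sketch faithfully fills in what the reference contains. One terminological quibble: the ``bracket-matching'' description is usually reserved for the Boolean lattice case, whereas for a general product $C_j \times [0,a_d]$ the standard step is the direct rectangular repartitioning into nested L-shaped chains---but the inductive scheme and the conclusion are the same.
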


\subsubsection{Perfectness and coloring of $D_n$}



Since $D_n$ is perfect, the chromatic number of $D_n$ is equal to its clique number; i.e, $\chi(D_n) = 1 + \Omega(n).$ In fact, we can provide an explicit coloring on $D_n$ with $1+ \Omega(n)$ colors. Specifically, we can define a coloring map $c: D_n \to \{0, 1, \ldots, \Omega(n) \}$ by the rule $c(m)=\Omega(m).$ Clearly, two numbers with the same degree cannot be adjacent, so $c$ is a coloring map.


    

\subsection{Planarity of $D_n$}
In this section, we classify  all $D_n$ which are planar. By exploring $D_n$ with some small values of $n$, we can see that $F(n)=(a_1, a_2, \ldots, a_d)$ with $d \leq 2$ and $\sum_{i=1}^d a_i \leq 3$ then $D_n$ is planar. See \cref{fig:D45} for a planar embedding when $F(n)=(1,2)$ and \cref{fig:D27} for a planar embedding when $F(n)=(3).$

\begin{figure}[!htb]
\parbox{6cm}{
\includegraphics[scale = 0.4]{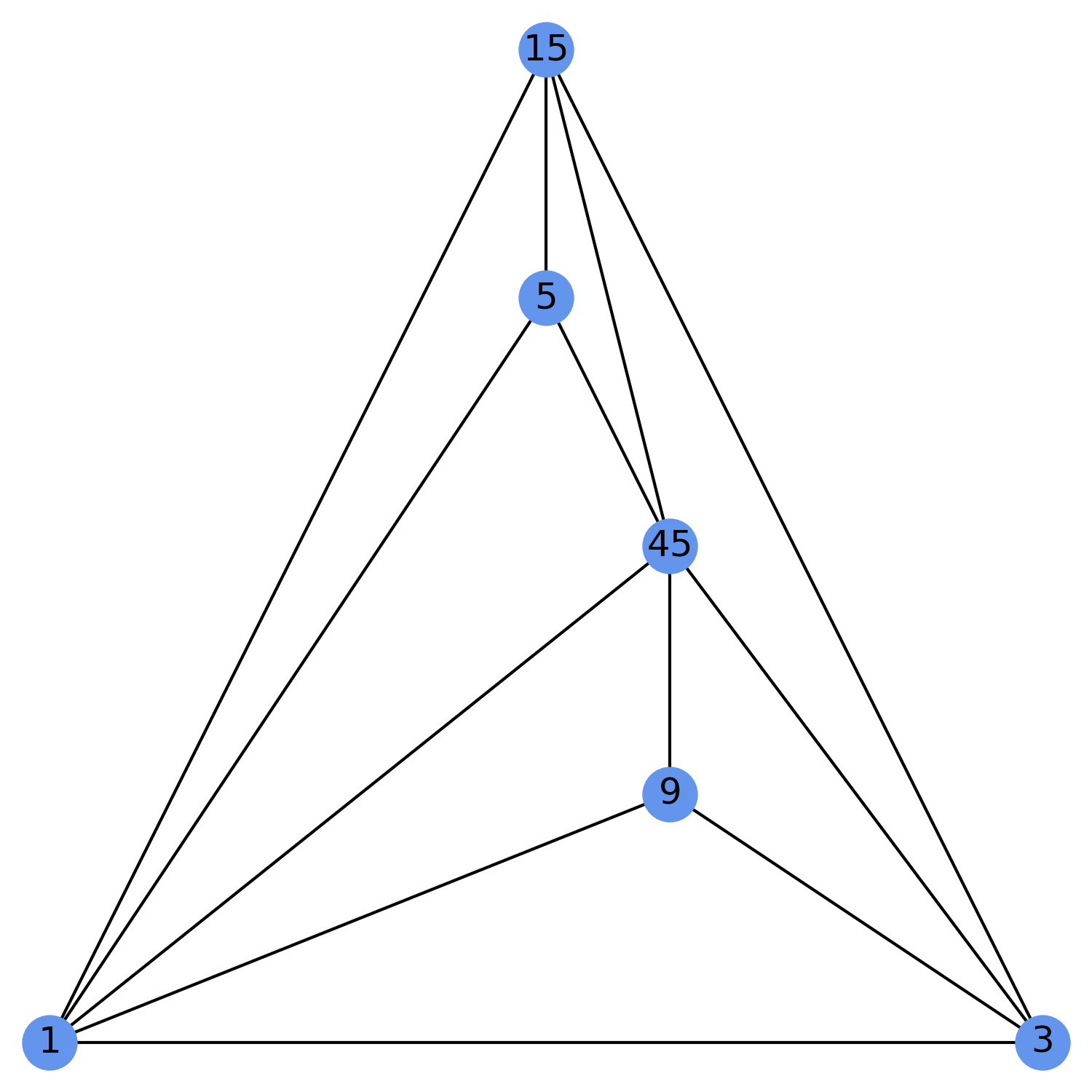}
\caption{A planar embedding of $D_{45}$}
\label{fig:D45}}
\qquad \quad
\begin{minipage}{6cm}
\includegraphics[scale = 0.4]{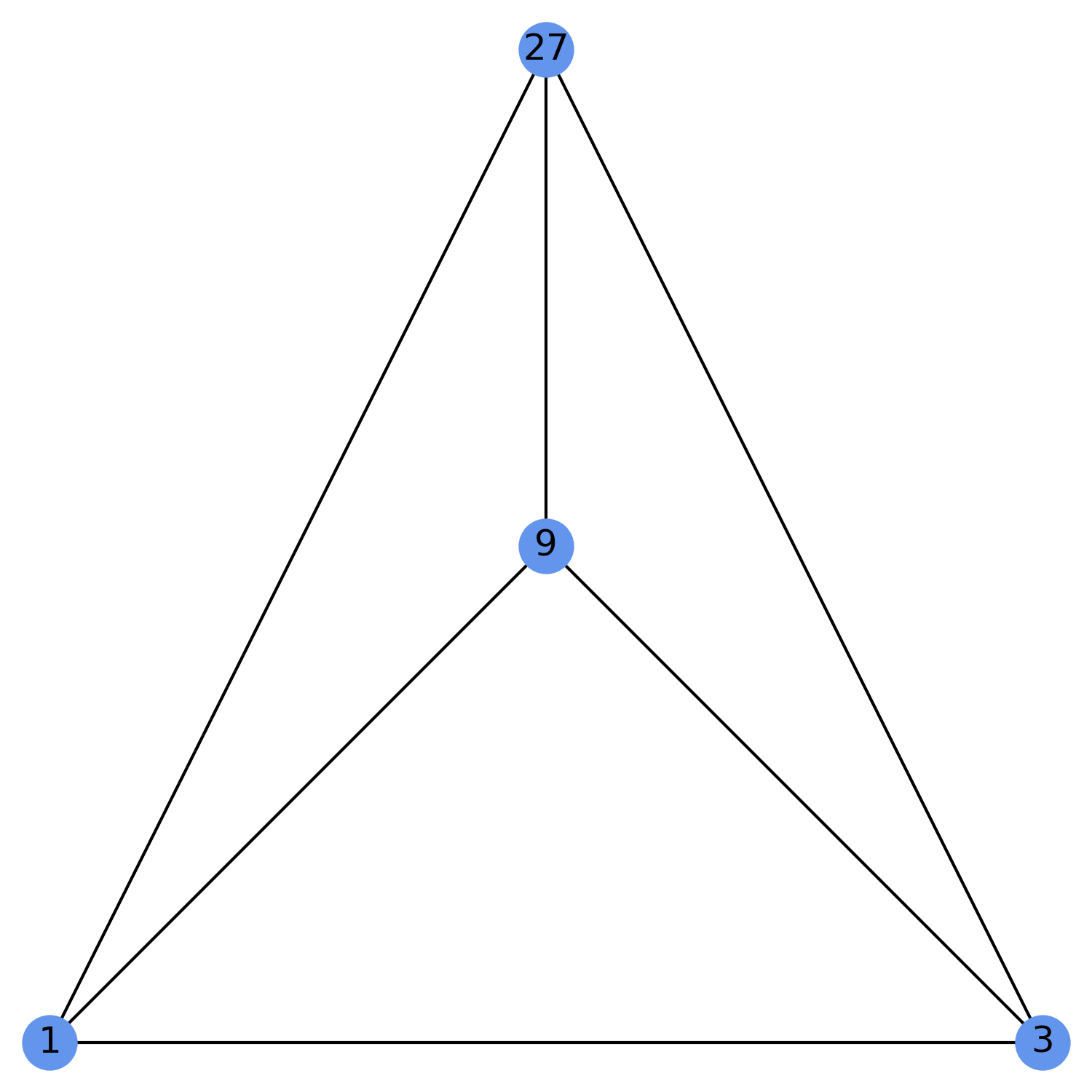}
\caption{A planar embedding of $D_{27}$.}
\label{fig:D27}
\end{minipage}
\end{figure}

The following theorem completely classifies $n$ such that $D_n$ is planar. 

\begin{prop}
The graph $D_{n}$ is planar if and only if one of the following holds:
\begin{enumerate}
\item $n = p^{r},\ p$ prime, $0 \leq r \leq 3$ 
\item $n = pq,\ p,q$ prime 
\item $n = p^{2}q,\ p,q$ prime
\end{enumerate}
\end{prop}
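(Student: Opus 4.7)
The plan is to split on $\Omega(n)$ and handle the non-planar direction via two different obstructions: a $K_5$ subgraph when $\Omega(n)$ is large, and an Euler-formula edge count when $n=pqr$. The positive (planar) direction is then handled by exhibiting explicit embeddings for the six listed shapes.

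First I would dispose of the case $\Omega(n) \geq 4$. By the clique proposition just proved, $D_n$ contains a clique of size $1+\Omega(n) \geq 5$, hence contains $K_5$ as a subgraph, so $D_n$ is non-planar. This immediately reduces the problem to $\Omega(n) \leq 3$. Factorization types with $\Omega(n) \leq 3$ are exactly $(), (1), (2), (1,1), (3), (2,1),$ and $(1,1,1)$, corresponding to $n \in \{1, p, p^2, pq, p^3, p^2q\}$ together with the extra case $n=pqr$ that we must rule out.

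For $n=pqr$, I would invoke the edge-count formula from the previous proposition. With factorization type $(1,1,1)$ we have $v=2\cdot 2\cdot 2 = 8$ vertices and
\[
e = v\left(\frac{3\cdot 3\cdot 3}{2^3}-1\right) = 27-8 = 19.
\]
Since a simple planar graph on $v$ vertices has at most $3v-6 = 18$ edges, $D_{pqr}$ is non-planar. This is the cleanest obstruction, and I do not expect any difficulty here.

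Finally, for the remaining cases I would simply display the planar embeddings. For $n=1$ the graph is a single vertex, for $n=p$ a single edge, for $n=p^2$ a triangle $K_3$, for $n=p^3$ the complete graph $K_4$ (which is well known to be planar), and for $n=pq$ the $4$-cycle $1 - p - pq - q - 1$. For $n=p^2q$ one can draw the six divisors explicitly (as in Figure~\ref{fig:D45}): place $1$ and $p^2q$ on the outside and arrange the four middle divisors $p, p^2, q, pq$ so that the comparability edges can be routed without crossings. The main obstacle in this part is purely expository—verifying the $p^2q$ embedding carefully—since the case is borderline: one checks that it has $v=6$ vertices and $e = 6(\tfrac{3\cdot 4}{4}-1) = 12 = 3v-6$ edges, so it is a maximal planar graph (a triangulation of the sphere) and planarity must be demonstrated rather than inferred from an edge count. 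Once this embedding is exhibited, combining the three steps completes the biconditional.
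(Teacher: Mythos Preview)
Your approach is correct and cleaner than the paper's, with one small slip: $D_{pq}$ is not the $4$-cycle $1\!-\!p\!-\!pq\!-\!q\!-\!1$ but that $4$-cycle together with the diagonal $\{1,pq\}$ (i.e., $K_4$ minus the edge $\{p,q\}$); it is of course still planar, so the argument is unaffected.

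The paper argues instead via hereditary planarity and minimal obstructions: it checks individually that $D_{p^4}$, $D_{p^2q^2}$, $D_{p^3q}$, and $D_{pqr}$ are non-planar (exhibiting an explicit $K_{3,3}$ in $D_{p^3q}$ and $K_5$-subdivisions in $D_{p^2q^2}$ and $D_{pqr}$, and also noting the edge count $e=19>18$ for $pqr$), and relies on the fact that any $n$ outside the list has one of these four as a divisor. Your $\Omega(n)\geq 4$ clique argument collapses the first three of those obstructions into a single line, leaving only $pqr$ to be handled separately---so your decomposition is more economical. The paper's case-by-case treatment, on the other hand, produces explicit Kuratowski witnesses in each borderline shape, which is more informative if one cares about the actual topological obstruction rather than just the yes/no answer.
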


\begin{proof}
We make use of the fact that a graph cannot be planar if it contains a
subgraph isomorphic to $K_{3,3}$ or to a subdivision of $K_{5}$.
Since if $D_{n}$ is planar and $m|n$, $D_{m}$ (a subgraph)
is also planar, and if $D_{n}$ is not planar and $n|m$, $D_{m}$ is also not
planar, it suffices to check the following cases (where $p,q,r$ are distinct
primes) 

\begin{enumerate}
\item \ $D_{p^{3}}$ is planar (see \cref{fig:D27}) and $D_{p^{4}}$ is not. 
This is true since $D_{p^{r}}$ is the complete graph $K_{r+1}$ which is
planar iff $r+1 \leq 4$.
\item \ $D_{pq}$ and $D_{p^{2}q}$ are planar (see \cref{fig:D45}). 
\item \ $D_{p^{2}q^{2}}$ is not planar.
For it contains a subgraph which is a subdivision of $K_{5}$.
\begin{center}
\begin{tikzpicture}

\coordinate (A) at (90:3); 
\coordinate (B) at (162:3); 
\coordinate (C) at (234:3); 
\coordinate (D) at (306:3); 
\coordinate (E) at (18:3); 

\draw[thick] (A) -- (B) -- (C) -- (D) -- (E) -- cycle;

\draw[thick] (A) -- (C);
\draw[thick] (A) -- (D);
\draw[thick] (B) -- (D);
\draw[thick] (B) -- (E);
\draw[thick] (C) -- (E);

\node[above] at (A) {$1$};
\node[left] at (B) {$p^2q$};
\node[below left] at (C) {$p^2q^2$};
\node[below right] at (D) {$q$};
\node[right] at (E) {$p$};

\coordinate (F) at ($(C)!0.33!(D)$); 
\coordinate (G) at ($(C)!0.66!(D)$);
\coordinate (H) at ($(E)!0.5!(D)$); 

\node[right] at (H) {$pq$};

\foreach \point in {A, B, C, D, E, H} {
    \draw[blue, thick] (\point) circle (0.05);
}
\end{tikzpicture}

\end{center}

\item  \ $D_{p^{3}q}$ is not planar.
It contains the following subgraph which isomorphic to $K_{3,3}$. 

\begin{center}
\begin{tikzpicture}
    \coordinate (A1) at (0, 2);
    \coordinate (A2) at (0, 0);
    \coordinate (A3) at (0, -2);

    \coordinate (B1) at (2, 2);
    \coordinate (B2) at (2, 0);
    \coordinate (B3) at (2, -2);

    \foreach \i in {1,2,3} {
        \foreach \j in {1,2,3} {
            \draw[thick] (A\i) -- (B\j);
        }
    }

    \foreach \point in {A1, A2, A3, B1, B2, B3} {
        \draw[blue, thick] (\point) circle (0.05);
    }

    \node[above] at (A1) {$1$};
    \node[below] at (A2) {$p$};
    \node[below] at (A3) {$q$};
    \node[above] at (B1) {$pq$};
    \node[below] at (B2) {$p^2q$};
    \node[below] at (B3) {$p^3q$};
\end{tikzpicture}

\end{center}

\item \ $D_{pqr}$ is not planar. 

\begin{center}
\begin{tikzpicture}

\coordinate (A) at (90:3); 
\coordinate (B) at (162:3); 
\coordinate (C) at (234:3); 
\coordinate (D) at (306:3); 
\coordinate (E) at (18:3); 

\draw[thick] (A) -- (B) -- (C) -- (D) -- (E) -- cycle;

\draw[thick] (A) -- (C);
\draw[thick] (A) -- (D);
\draw[thick] (B) -- (D);
\draw[thick] (B) -- (E);
\draw[thick] (C) -- (E);

\node[above] at (A) {$1$};
\node[left] at (B) {$pqr$};
\node[below left] at (C) {$pq$};
\node[below right] at (D) {$r$};
\node[right] at (E) {$p$};

\coordinate (F) at ($(C)!0.33!(D)$); 
\coordinate (G) at ($(C)!0.66!(D)$); 

\coordinate (H) at ($(E)!0.5!(D)$); 

\node[below] at (F) {$q$};
\node[below ] at (G) {$rq$};
\node[right] at (H) {$pr$};

\foreach \point in {A, B, C, D, E, F, G, H} {
    \draw[blue, thick] (\point) circle (0.05);
}
\end{tikzpicture}

\end{center}

It fails the known necessary planarity condition $e \leq 3v - 6$. \
Alternatively, we note that it contains a subgraph which is a subdivision of
$K_{5}$.
\end{enumerate}
\end{proof}





\section{Spectrum of $D_n$} \label{sec:spectrum}
In this section, we study the spectrum of $D_n$. We focus on two main themes: 
\begin{enumerate}
\item How the spectra change when adding more prime factors to $n$, 
\item Some special eigenvalues of $D_n.$

\end{enumerate}

\subsection{Variation of the spectra of $D_n$}
For each $n$, let $f_n$ be the characteristic polynomial of $D_n.$
\begin{thm} \label{thm:main}
    Let $n$ be a natural number. Let $p,q$ be distinct prime numbers such that $\gcd(n,pq)=1.$ Then $f_{n} \mid f_{npq}.$
\end{thm}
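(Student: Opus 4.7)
The plan is to realise $D_{npq}$ as a ``fibration'' over $D_n$ whose fibres are copies of $D_{pq}$, and to find an explicit invariant subspace for $A_{npq}$ on which the operator acts exactly as $A_n$. Since $\gcd(n,pq)=1$, every divisor of $npq$ has a unique expression $d\cdot m$ with $d\mid n$ and $m\in\{1,p,q,pq\}$, so we identify the vertex set of $D_{npq}$ with $\mathrm{Div}(n)\times\{1,p,q,pq\}$ and order it by the four ``layers'' $m=1,p,q,pq$. This gives a $4\times 4$ block form for $A_{npq}$.

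First I would compute each block. For $d_1,d_2\mid n$ and $m_1,m_2\in\{1,p,q,pq\}$, one has $d_1m_1\mid d_2m_2$ iff $d_1\mid d_2$ and $m_1\mid m_2$ (because the $d$-parts and $m$-parts are coprime). Writing $L_n$ for the $0/1$-matrix with $L_n(d_1,d_2)=1$ iff $d_1\mid d_2$ and $U_n=L_n^{T}$, the diagonal blocks are all equal to $A_n$, blocks between comparable layers (i.e.\ $1\!\to\!p$, $1\!\to\!q$, $1\!\to\!pq$, $p\!\to\!pq$, $q\!\to\!pq$) are $L_n$ (and their transposes $U_n$), and the crucial point is that the block between the incomparable layers $p$ and $q$ is the zero matrix, since $p$ and $q$ are coprime. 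Ordering the layers $1,p,q,pq$, this yields
\[
A_{npq}=\begin{pmatrix} A_n & L_n & L_n & L_n \\ U_n & A_n & 0 & L_n \\ U_n & 0 & A_n & L_n \\ U_n & U_n & U_n & A_n \end{pmatrix}.
\]

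Next I would exhibit the invariant subspace. Consider
\[
W=\Bigl\{\,(0,v,-v,0)\,:\,v\in\mathbb C^{\mathrm{Div}(n)}\,\Bigr\},
\]
an $|\mathrm{Div}(n)|$-dimensional subspace of $\mathbb C^{\mathrm{Div}(npq)}$. Applying $A_{npq}$ to $(0,v,-v,0)$, the first and fourth components give $L_nv-L_nv=0$ and $U_nv-U_nv=0$, while the vanishing of the $(p,q)$ and $(q,p)$ blocks forces the second and third components to be $A_nv$ and $-A_nv$. Thus $W$ is $A_{npq}$-invariant and, under the obvious identification $W\cong\mathbb C^{\mathrm{Div}(n)}$, the restricted operator is exactly $A_n$. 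The characteristic polynomial of $A_{npq}|_W$ is therefore $f_n$, and because $W$ is an invariant subspace, $f_n$ divides the characteristic polynomial of the full operator, giving $f_n\mid f_{npq}$.

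The main obstacle is purely bookkeeping: verifying that the $(p,q)$ block really is zero and that the comparable layer blocks all equal $L_n$ (respectively $U_n$). Once the block structure is correctly identified, the antisymmetric ``ghost'' subspace $W$ essentially writes itself, because the two middle layers play symmetric roles and are mutually non-adjacent. I expect the same strategy, applied to a larger antisymmetric subspace built from an additional prime $r$ with $r\|n$, to yield the stronger divisibility $f_n^{\,2}\mid f_{npq}$ stated in the introduction, but for the theorem as posed the single invariant subspace $W$ suffices.
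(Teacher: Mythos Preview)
Your proof is correct and coincides with the paper's second proof of the theorem (Theorem~3.4 / Proposition~3.5): your antisymmetric vector $(0,v,-v,0)$ is exactly the tensor product $v\otimes h$ where $h$ is the paper's function on $S_0=\{0,1\}^2$ with $h(0,0)=h(1,1)=0$, $h(1,0)=-1$, $h(0,1)=1$, and your block decomposition of $A_{npq}$ is the same matrix $M$ the paper writes down. The paper also gives a first proof using the identical block form but arguing via a row operation on $\det(M-\lambda I)$ (subtracting block row~2 from block row~3 to factor out $\det(B-\lambda I_k)$) rather than via an invariant subspace.
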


\begin{proof}
Suppose that $p,q$ are distinct primes and that $\gcd (n,pq)=1$. \ Let 
$1=a_{1}<\cdots <a_{k}=n$ be the divisors of $n$, and let $B$ be the
adjacency matrix of $D_{n}$ with respect to this order. \ Let the $k\times k$
matrix $C$ be defined by $C_{ij}=1$ if $a_{i}|a_{j}$ and $0$ otherwise. \
(Then $B=C+C^{T}-2I_{k}$.) \ Now list divisors of $npq$ in the order $%
a_{1},\cdots ,a_{k},pa_{1},\cdots ,pa_{k},qa_{1},\cdots
,qa_{k},pqa_{1},\cdots ,pqa_{k}$. \ With respect to this order, the
adjacency matrix for $D_{npq}$ is%
\[
M=\left[ 
\begin{array}{cccc}
B & C & C & C \\ 
C^{T} & B & 0 & C \\ 
C^{T} & 0 & B & C \\ 
C^{T} & C^{T} & C^{T} & B%
\end{array}%
\right] 
\]%
Write $I_{k}$ for the $k\times k$ identity matrix and $\mathbf{0}_{k}$ for
an $k\times k$ matrix of $0$'s. \ The characteristic polynomial of $M$ can
be analyzed as follows, where for the third equality, we have subtracted the
second row of blocks from the third:%
\begin{eqnarray*}
f_{npq} &=&\det (M-\lambda I_{n+4})=\det\left[ 
\begin{array}{cccc}
B-\lambda I_{k} & C & C & C \\ 
C^{T} & B-\lambda I_{k} & \mathbf{0}_{k} & C \\ 
C^{T} & \mathbf{0}_{k} & B-\lambda I_{k} & C \\ 
C^{T} & C^{T} & C^{T} & B-\lambda I_{k}%
\end{array}%
\right]  \\
&=&\det \left[ 
\begin{array}{cccc}
B-\lambda I_{k} & C & C & C \\ 
C^{T} & B-\lambda I_{k} & \mathbf{0}_{k} & C \\ 
\mathbf{0}_{k} & \lambda I_{k}-B & B-\lambda I_{k} & \mathbf{0}_{k} \\ 
C^{T} & C^{T} & C^{T} & B-\lambda I_{k}%
\end{array}%
\right]  \\
&=&\det \left[ 
\begin{array}{cccc}
I_{k} & \mathbf{0}_{k} & \mathbf{0}_{k} & \mathbf{0}_{k} \\ 
\mathbf{0}_{k} & I_{k} & \mathbf{0}_{k} & \mathbf{0}_{k} \\ 
\mathbf{0}_{k} & \mathbf{0}_{k} & B-\lambda I_{k} & \mathbf{0}_{k} \\ 
\mathbf{0}_{k} & \mathbf{0}_{k} & \mathbf{0}_{k} & I_{k}%
\end{array}%
\right] \det \left[ 
\begin{array}{cccc}
B-\lambda I_{k} & C & C & C \\ 
C^{T} & B-\lambda I_{k} & 0 & C \\ 
\mathbf{0}_{k} & -I_{k} & I_{k} & \mathbf{0}_{k} \\ 
C^{T} & C^{T} & C^{T} & B-\lambda I_{k}%
\end{array}%
\right]  \\
&=&\det (B-\lambda I_{k})\cdot \text{(a polynomial in }\lambda \text{)}%
=f_{n}\cdot \text{(a polynomial in }\lambda \text{).}
\end{eqnarray*}%
This shows $f_{n}~|~f_{npq}$.
\end{proof}
By similar arguments, we can show:
\begin{thm} \label{thm:main2}
   Let $n$ be a natural number, $p,q$ distinct primes not dividing $n$. If there is a prime number $r$ such that $r\mid \mid n$, then $f_n^2|f_{npq}$. In particular, this is the case when $n>1$ is squarefree.
    \end{thm}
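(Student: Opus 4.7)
The plan is to exploit the $S_3$-symmetry of $D_{npq}$ that arises because each of $r, p, q$ divides $npq$ with multiplicity exactly one. Write $m = n/r$ so that $n = mr$ with $\gcd(m, r) = 1$; then every divisor of $npq = mrpq$ has a unique expression $sb$ with $b \mid m$ and $s = \prod_{t \in S} t$ for some $S \subseteq \{r, p, q\}$. Since $r, p, q$ play symmetric roles in $npq$, the group $S_3$ acts on $D_{npq}$ by graph automorphisms permuting these three primes. Consequently the adjacency matrix $M$ of $D_{npq}$ commutes with the associated permutation representation on $V = \C^{d(npq)}$, and $V$ splits as a direct sum of $M$-invariant $S_3$-isotypic components, leading to a factorization of $f_{npq}$.

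The first step is the isotypic decomposition of $V$. The orbits of $S_3$ on $2^{\{r,p,q\}}$ are $\{\emptyset\}$, $\{\{r\},\{p\},\{q\}\}$, $\{\{r,p\},\{r,q\},\{p,q\}\}$ and $\{\{r,p,q\}\}$; each singleton orbit contributes a trivial rep, while each size-$3$ orbit contributes $\text{triv}\oplus\text{std}$, where $\text{std}$ denotes the $2$-dimensional standard irrep of $S_3$ (no sign rep appears). Setting $\ell := d(m)$, this yields
\[
V \;\cong\; \bigl(4\cdot\text{triv}\;\oplus\;2\cdot\text{std}\bigr)\otimes\C^{\ell},
\]
so the standard isotypic component has the form $V^{\text{std}}\cong\text{std}\otimes W$ with $\dim W = 2\ell = d(n)$. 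By Schur's lemma, $M$ acts on $V^{\text{std}}$ as $I_{\text{std}}\otimes M''$ for some operator $M''$ on $W$, and therefore the characteristic polynomial of $M|_{V^{\text{std}}}$ equals $\det(\lambda I_W - M'')^{\dim\text{std}} = \det(\lambda I_W - M'')^{2}$.

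The heart of the proof is then to show that $\det(\lambda I_W - M'') = f_n$. I would do this by restricting $M$ to the $(-1)$-eigenspace of the transposition $\sigma = (r\,p) \in S_3$: inside $\text{std}$, the sign representation of $\langle\sigma\rangle\cong S_2$ appears with multiplicity $1$, so this eigenspace equals $V^{\text{std}} \cap V_-^{\sigma}$, has dimension $2\ell$, and $M$ acts there as the operator $M''$. A natural basis is
\[
u_i := e_{rb_i}-e_{pb_i},\qquad v_i := e_{rqb_i}-e_{pqb_i}\qquad(i=1,\ldots,\ell),
\]
where $b_1,\ldots,b_\ell$ enumerate the divisors of $m$. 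A direct case check, partitioning the divisors of $npq$ into the eight blocks indexed by subsets of $\{r,p,q\}$ and observing that the contributions from the $\sigma$-invariant blocks $\emptyset, \{q\}, \{r,p\}, \{r,p,q\}$ cancel in the differences, yields
\[
M u_i \;=\; \sum_j A_{ij}u_j+\sum_j E_{ij}v_j,\qquad M v_i \;=\; \sum_j (E^T)_{ij}u_j+\sum_j A_{ij}v_j,
\]
where $A$ is the adjacency matrix of $D_m$ and $E_{ij} := [b_i\mid b_j]$. In the ordered basis $(u_1,\ldots,u_\ell,v_1,\ldots,v_\ell)$ the matrix of $M''$ is therefore $\bigl(\begin{smallmatrix}A & E\\ E^T & A\end{smallmatrix}\bigr)$, which is precisely the adjacency matrix of $D_n = D_{mr}$ with its divisors listed as $b_1,\ldots,b_\ell,rb_1,\ldots,rb_\ell$. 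This gives the desired equality $\det(\lambda I - M'') = f_n$, so the standard isotypic contributes a factor of $f_n^{\,2}$ to $f_{npq}$ and $f_n^{\,2}\mid f_{npq}$. The squarefree case is immediate since any prime divisor $r$ of a squarefree $n>1$ satisfies $r\mid\mid n$.

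The main obstacle is the explicit computation of $Mu_i$ and $Mv_i$: it amounts to bookkeeping across the eight blocks of $D_{npq}$, but the transposition $\sigma$ automatically kills the fixed blocks and leaves exactly the data needed to recover $D_n$.
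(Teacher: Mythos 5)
Your argument is correct, and it takes a genuinely different route from the paper. The paper proves $f_n^2 \mid f_{npq}$ by writing the adjacency matrix of $D_{npq}$ in the $8\times 8$ block form indexed by the subsets of $\{r,p,q\}$ and then performing explicit block row and column operations on $\det(M-\lambda I)$ to peel off the factor $\det(B-\lambda I)=f_n$ twice; it is elementary linear algebra but notationally heavy, and the operations are somewhat ad hoc. You instead exploit the $S_3$-symmetry permuting $r,p,q$ (valid precisely because $r\mid\mid n$ and $p,q\nmid n$, so all three primes occur to the first power in $npq$): the decomposition $\C^8\cong 4\,\text{triv}\oplus 2\,\text{std}$ of the subset space is right (no sign representation occurs in the two orbits of size $3$), Schur's lemma gives $M=\mathrm{id}_{\text{std}}\otimes M''$ on the standard isotypic component and hence a squared factor $\det(\lambda I_W-M'')^2$ of $f_{npq}$, and your identification of $M''$ with the adjacency matrix of $D_n$ via the $(-1)$-eigenspace of the transposition $(r\,p)$ checks out: in the basis $u_i=e_{rb_i}-e_{pb_i}$, $v_i=e_{rqb_i}-e_{pqb_i}$ the contributions from the $\sigma$-fixed blocks cancel and the remaining coefficients are exactly $A_{ij}$ and $E_{ij}=[b_i\mid b_j]$, i.e.\ the matrix $\bigl(\begin{smallmatrix}B_0&C\\ C^T&B_0\end{smallmatrix}\bigr)$ of the paper. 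Conceptually your proof is an upgrade of the paper's second (eigenvector-lifting) proof of Theorem 3.1: that tensor-product argument only yields $f_n\mid f_{npq}$, whereas your isotypic/multiplicity-space argument explains structurally why the factor appears squared, and it generalizes readily (larger symmetric groups when more first-power primes are appended, and it localizes eigenvalue multiplicities). What the paper's approach buys in exchange is self-containedness: it needs no representation theory and works directly with determinants, which matches the style of the rest of Section 3.
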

   
\begin{proof}
Let $n$ be a natural number, $p,q,r$ primes with $r\mid \mid n$ and $\gcd(n,pq)=1$.  Set $n=n_0r.$

    Let \( a_1 = 1 < a_2 < \cdots < a_k \) be the divisors of \( n_0 \). 
Let \( B_0 \) be the adjacency matrix of \( D_{n_0} \). Let \( C \) be the matrix defined by 
\[
C_{ij} = 1 \iff a_i \mid a_j.\]
List the divisors of \( n_0 r p q \) in the order:
\begin{align*}
&\underbrace{a_1, \ldots, a_k}_{(1)}, \quad 
\underbrace{r a_1, \ldots, r a_k}_{(2)}, \quad 
\underbrace{p a_1, \ldots, p a_k}_{(3)}, \quad 
\underbrace{p r a_1, \ldots, p r a_k}_{(4)}, \\
&\underbrace{q a_1, \ldots, q a_k}_{(5)}, \quad 
\underbrace{q r a_1, \ldots, q r a_k}_{(6)}, \quad 
\underbrace{p q a_1, \ldots, p q a_k}_{(7)}, \quad 
\underbrace{p q r a_1, \ldots, p q r a_k}_{(8)}.
\end{align*}
We note that \[
C_{ij} = 1 \iff a_i \mid a_j \quad \iff \quad r a_i \mid r a_j.
\]
Then the adjacency matrix for \( D_{n_0 r p q} \) is of the form:
\[
M = 
\begin{bmatrix}
B_0 & C & C & C & C & C & C & C \\
C^T & B_0 & 0 & C & 0 & C & 0 & C \\
C^T & 0 & B_0 & C & 0 & 0 & C & C \\
C^T & C^T & C^T & B_0 & 0 & 0 & 0 & C \\
C^T & 0 & 0 & 0 & B_0 & C & C & C \\
C^T & C^T & 0 & 0 & C^T & B_0 & 0 & C \\
C^T & 0 & C^T & 0 & C^T & 0 & B_0 & C \\
C^T & C^T & C^T & C^T & C^T & C^T & C^T & B_0 \\
\end{bmatrix}
= 
\begin{bmatrix}
B & A & A & A \\
A^T & B & 0 & A \\
A^T & 0 & B & A \\
A^T & A^T & A^T & B \\
\end{bmatrix},
\]
where
\[
B = \begin{bmatrix}
B_0 & C \\
C^T & B_0 \\
\end{bmatrix},
\quad
A = \begin{bmatrix}
C & C \\
0 & C \\
\end{bmatrix}.
\]
We note that $B$ is precisely the adjacency matrix for $D_n$. We have
\[
\begin{aligned}
| M - \lambda I_N| &= \begin{vmatrix}
B_0-\lambda I & C & C & C & C & C & C & C \\
C^T & B_0-\lambda I & 0 & C & 0 & C & 0 & C \\
C^T & 0 & B_0-\lambda I & C & 0 & 0 & C & C \\
C^T & C^T & C^T & B_0-\lambda I & 0 & 0 & 0 & C \\
C^T & 0 & 0 & 0 & B_0-\lambda I & C & C & C \\
C^T & C^T & 0 & 0 & C^T & B_0-\lambda I & 0 & C \\
C^T & 0 & C^T & 0 & C^T & 0 & B_0-\lambda I & C \\
C^T & C^T & C^T & C^T & C^T & C^T & C^T & B_0-\lambda I \\
\end{vmatrix}\\
&=
\pm\begin{vmatrix}
B_0 - \lambda I & C & C & C & C & C & C & C \\
0 & B_0 - \lambda I & -B_0 + \lambda I & 0 & 0 & C & -C & 0 \\
C^T & 0 & B_0 - \lambda I & C & 0 & 0 & C & C \\
C^T & C^T & C^T & B_0 - \lambda I & 0 & 0 & 0 & C \\
0 & 0 & -B_0 + \lambda I & -C & B_0 - \lambda I & C & 0 & 0 \\
0 & 0 & -C^T & -B_0 + \lambda I & C^T & B_0 - \lambda I & 0 & 0 \\
0 & C^T & -C^T & 0 & 0 & B_0 - \lambda I & -B_0 + \lambda I & 0 \\
C^T & C^T & C^T & C^T & C^T & C^T & C^T & B_0 - \lambda I
\end{vmatrix}.
\end{aligned}
\]
Here, $I=I_k$ and we the following (block) row operations:
\begin{align*}
(2):\quad & R_2 \leftarrow R_2 - R_3 \\
(5):\quad & R_5 \leftarrow R_5 - R_3  \\
(6):\quad & R_6 \leftarrow R_6 - R_4 \\
(7):\quad & R_7 \leftarrow R_6 - R_7.
\end{align*}
Thus
\[
\begin{aligned}
| M - \lambda I_N |   =
&\pm 
\begin{vmatrix}
I & 0 & 0 & 0 & 0 & 0 & 0 & 0 \\
0 & I & 0 & 0 & 0 & 0 & 0 & 0 \\
0 & 0 & I & 0 & 0 & 0 & 0 & 0 \\
0 & 0 & 0 & I & 0 & 0 & 0 & 0 \\
0 & 0 & 0 & 0 & B_0 - \lambda I & C &0&0\\
0 & 0 & 0 & 0 & C^T & B_0 - \lambda I & 0 & 0 \\
0 & 0 & 0 & 0 & 0 & 0 & I & 0 \\
0 & 0 & 0 & 0 & 0 & 0 & 0 & I \\
\end{vmatrix}\times\\
&\begin{vmatrix}
B_0 - \lambda I & C & C & C & C & C & C & C \\
0 & B_0 - \lambda I & -B_0 + \lambda I & 0 & 0 & C & -C & 0 \\
C^T & 0 & B_0 - \lambda I & C & 0 & 0 & C & C \\
C^T & C^T & C^T & B_0 - \lambda I & 0 & 0 & 0 & C \\
0 & 0 & -I & 0 & I & 0 & 0 & 0 \\
0 & 0 & 0 & -I  & 0 &  I & 0 & 0 \\
0 & C^T & -C^T & 0 & 0 & B_0 - \lambda I & -B_0 + \lambda I & 0 \\
C^T & C^T & C^T & C^T & C^T & C^T & C^T & B_0 - \lambda I
\end{vmatrix}.
\end{aligned}
\]
In the above product of determinants, the first determinant is $f_n$. For the second determinant, which we denote temporarily as $S$, we move the block rows $R2$ and $R7$ to the top, we obtain
\[
\begin{aligned}
S&=\pm \begin{vmatrix}
0 & B_0 - \lambda I & -B_0 + \lambda I & 0 & 0 & C & -C & 0 \\
0 & C^T & -C^T & 0 & 0 & B_0 - \lambda I & -B_0 + \lambda I & 0 \\
B_0 - \lambda I & C & C & C & C & C & C & C \\
C^T & 0 & B_0 - \lambda I & C & 0 & 0 & C & C \\
C^T & C^T & C^T & B_0 - \lambda I & 0 & 0 & 0 & C \\
0 & 0 & -I & 0 & I & 0 & 0 & 0 \\
0 & 0 & 0 & -I  & 0 &  I & 0 & 0 \\
C^T & C^T & C^T & C^T & C^T & C^T & C^T & B_0 - \lambda I
\end{vmatrix} 
\end{aligned}
\]
Now in the above determinant, we move the block columns $C2$, $C_6$, $C_3$ and $C_7$ (in this order) to the left, we imply that

\[
\begin{aligned}
    S&=\pm\begin{vmatrix}
        B_0 - \lambda I & C & -B_0 + \lambda I & -C & 0 & 0 & 0 & 0 \\
C^T & B_0 - \lambda I & -C^T & -B_{0}+ \lambda I & 0 & 0 & 0 & 0 \\
C & C & C & C & B_0 - \lambda I & C & C & C \\
0 & 0 & B_0 - \lambda I & C & C^T & C & 0 & C \\
C^T & 0 & C^T & 0  &C^T& B_0 - \lambda I  & 0 & C \\
0 & 0 & -I & 0 & 0 & 0 & I & 0 \\
0 & I & 0 & 0 & 0 & -I & 0 & 0 \\
C^T & C^T & C^T & C^T & C^T & C^T & C^T & B_0 - \lambda I
    \end{vmatrix}\\
&=\pm 
\begin{vmatrix}
    B_0 - \lambda I & C & 0 & 0 & 0 & 0 & 0 & 0 \\
C^T & B_0 - \lambda I & 0 & 0 & 0 & 0 & 0 & 0 \\
0 & 0 & I & 0 & 0 & 0 & 0 & 0 \\
0 & 0 & 0 & I & 0 & 0 & 0 & 0 \\
0 & 0 & 0 & 0 & I & 0 & 0 & 0 \\
0 & 0 & 0 & 0 & 0 & I & 0 & 0 \\
0 & 0 & 0 & 0 & 0 & 0 & I & 0 \\
0 & 0 & 0 & 0 & 0 & 0 & 0 & I \\
\end{vmatrix}\times
\begin{vmatrix}
I & 0 & -I & 0 & 0 & 0 & 0 & 0 \\
0 & I & 0 & -I & 0 & 0 & 0 & 0 \\
C & C & C  &C &B_0 - \lambda I & C & C & C  \\
0 & 0 & B_0 \lambda I & C& C^T & C & 0 & C  \\
C^T & 0 & C^T &0 & C^T & B_0 - \lambda I & 0 & C\\
0 & 0 & -I & 0 & 0 & 0 & I & 0 \\
0 & I & 0 & 0 & 0& -I & 0 & 0  \\
C^T & C^T & C^T & C^T & C^T & C^T & C^T & C^T
\end{vmatrix}\\
&=\pm f_n \times (\text{a polynomial in $\lambda$}).
\end{aligned}
\]
Hence $f_{npq}$ is divisible by $f_n^2$.\\
Of course if $n>1$ is squarefree, we can choose for $r$ any prime dividing $n$.
\end{proof}

We discuss a different proof for \cref{thm:main}. In essence, the two proofs are the same. However, this approach might be applied to other problems such as bounding the multiplicity of certain eigenvalues (see for example \cref{cor:inclusion}). We first discuss the general idea that leads to this approach. Let $(S, \leq) $ be a partially ordered set and $D_{S}$ the compatibility graph associated to $S.$ Namely, the vertex set of $D_S$ is $S$ and two vertices $a, b$ are adjacent if $a<b$ or $b<a.$ As explained in \cref{subsec:equivalent}, $D_n$ is a special case of compatibility graphs. By definition, an eigenvector for $D_S$ associated with an eigenvalue $\lambda$ is the same as a function $f: S \to \C$ such that for each $\alpha \in S$ 
\[ \sum_{\beta < \alpha} f(\beta) + \sum_{\beta > \alpha} f(\beta) = \lambda f(\alpha). \]

For two partially ordered sets $S_1, S_2$, their cartesian product $S = S_1 \times S_2$ is defined in a canonical way. Specifically, as a set $S$ is the cartesian product of $S_1$ and $S_2$. In addition, we say that $(\beta_1, \beta_2) \leq (\alpha_1, \alpha_2)$ if and only if $\beta_1 \leq \alpha_1$ and $\beta_2 \leq \alpha_2.$ With this perspective, we can see that the partially ordered set $\Div(npq)$ is isomorphic to the product of $\Div(n)$ and $S_0$ where 
\[ S_0 =  \{(\alpha_1, \alpha_2) \in \Z^{2} \mid 0 \leq \alpha_i \leq 1 \} .\] 

The following theorem is a natural generalization of \cref{thm:main}.
\begin{thm} \label{thm:second_proof}
    Let $S$ be a partially ordered set and $\widehat{S}= S \times S_0.$ Let $f_{S}$ (respectively $f_{\widehat{S}})$ be the characteristic polynomial of $D_{S}$ (respectively $D_{\hat{S}})$. Then $f_{S} \mid f_{\widehat{S}}.$
\end{thm}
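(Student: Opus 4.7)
The plan is to exploit the eigenfunction description of compatibility graphs recalled just before the theorem, rather than to manipulate block adjacency matrices. An eigenvector of $D_S$ for an eigenvalue $\lambda$ is a function $g\colon S\to\C$ satisfying
\[
\sum_{\beta<\alpha} g(\beta) + \sum_{\beta>\alpha} g(\beta) \;=\; \lambda\, g(\alpha)\qquad\text{for all } \alpha\in S,
\]
and analogously on $\widehat S$. Since both adjacency matrices are real symmetric, algebraic and geometric multiplicities agree; it therefore suffices to exhibit, for each eigenvalue $\lambda$ of $D_S$, a linear injection of the $\lambda$-eigenspace of $D_S$ into the $\lambda$-eigenspace of $D_{\widehat S}$, and then sum multiplicities to conclude $f_S\mid f_{\widehat S}$.

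The construction I would use exploits the two incomparable elements $u=(1,0)$ and $u'=(0,1)$ of $S_0$. Given an eigenfunction $g$ of $D_S$ with eigenvalue $\lambda$, define $\widehat g\colon \widehat S\to\C$ by
\[
\widehat g(s,u) = g(s),\qquad \widehat g(s,u')=-g(s),\qquad \widehat g\bigl(s,(0,0)\bigr) = \widehat g\bigl(s,(1,1)\bigr)=0.
\]
The map $g\mapsto \widehat g$ is clearly linear and injective, because $g(s)$ is recovered as $\widehat g(s,u)$.

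It then remains to verify the eigenvalue relation for $\widehat g$ at each of the four types of vertices of $\widehat S$. At $(s,u)$ the decisive combinatorial fact is that $(s,u)$ and $(s',u')$ are \emph{never} adjacent in $\widehat S$ (because $u$ and $u'$ are incomparable in $S_0$), so the $+g$ layer and the $-g$ layer decouple; the remaining contributions from neighbors $(s',u)$ with $s'$ comparable to $s$ in $S$ reproduce exactly the defining relation for $g$ and yield $\lambda g(s)=\lambda\widehat g(s,u)$. The case $(s,u')$ is symmetric and gives $-\lambda g(s)=\lambda\widehat g(s,u')$. At the ``extreme'' vertices $(s,(0,0))$ and $(s,(1,1))$, the neighbors of the form $(s',u)$ and $(s',u')$ range over the \emph{same} set of indices $s'$ (namely $s'\geq s$, respectively $s'\leq s$), so the contributions $g(s')$ and $-g(s')$ cancel in pairs and the sum is $0=\lambda\cdot 0$, matching $\widehat g\bigl(s,(0,0)\bigr)=\widehat g\bigl(s,(1,1)\bigr)=0$.

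The step that requires care is the adjacency bookkeeping inside $\widehat S$: for each pair of $S_0$-levels one has to determine exactly which pairs of $s$-indices produce edges, and then check that the chosen support and signs of $\widehat g$ on $S_0$ simultaneously (i) preserve the eigenvalue equation on the ``middle'' levels and (ii) generate the cancellations on the ``extreme'' levels. Once this is verified the argument is uniform in $\lambda$, avoids block determinants entirely, and has the additional feature anticipated in the excerpt: enlarging the family of lifts one is allowed to take on $S_0$ leads directly to multiplicity bounds of the form alluded to in \cref{cor:inclusion}.
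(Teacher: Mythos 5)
Your proposal is correct and is essentially the paper's own argument: your lift $\widehat g$ is exactly the tensor product $g\otimes h$ with the function $h$ on $S_0$ supported on the two incomparable middle elements with values $\pm 1$ (up to an immaterial sign flip), and your case-by-case cancellation check is the content of \cref{prop:second_proof}. The multiplicity bookkeeping via symmetry/diagonalizability and injectivity of $g\mapsto\widehat g$ likewise matches the paper's passage from a full eigenbasis of $D_S$ to a linearly independent family of lifted eigenvectors of $D_{\widehat S}$.
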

To prove this statement, our idea is to show that each eigenvector of $D_{S}$ lifts to an eigenvector for $D_{\widehat{S}}$ with the same eigenvalue. Because we are dealing with direct product, a natural approach would be using the tensor product. Let us recall its formal definition. 
\begin{definition} \label{def:tensor}
Let $S=S_1 \times S_2$ and $g: S_1 \to \C$ and $h:S_2 \to \C$ be two functions. The tensor product $f:=g \otimes h$ of $g$ and $h$ is the function $f: S_1 \times S_2 \to \C$ defined by the rule $f((s_1, s_2))= g(s_1)h(s_2).$
\end{definition}
It is not true in general that if $g, h$ are eigenvectors for $S_1, S_2$ then $g \otimes h$ is an eigenvector for $S.$ However, there are cases where it is indeed the case. In fact, $D_{S_0}$ has a special eigenvector that makes this work. More specifically, let $h: S_0 \to \C$ defined by the rule 
\[ h((0,0))=h((1,1))=0, h((0,1))=1, h((1,0))=-1.\]
Then $h$ has the property that for each $\gamma \in S_0$ 
\[ \sum_{\gamma<\epsilon} h(\epsilon) = \sum_{\gamma>\epsilon} h(\epsilon)=0. \]

In particular, $h$ is an eigenvector associated with the eigenvalue $0.$ We have the following observation.

\begin{prop} \label{prop:second_proof}

Let $S$ be a partially ordered set and $\widehat{S} = S \times S_0.$ Let $g: S \to \C$ be an eigenvector of $D_S$ associated with the eigenvalue $\lambda.$ Then $\widehat{g}:=g \otimes h: \widehat{S} \to \C$ is an eigenvector for $D_{\widehat{S}}$ associated with $\lambda.$

\end{prop}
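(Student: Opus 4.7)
The plan is to verify directly that $\widehat{g} = g \otimes h$ satisfies the eigenvector equation at every vertex $(\alpha, \gamma) \in \widehat{S}$. Concretely, I want to show
\[
\sum_{(\beta,\delta) < (\alpha,\gamma)} g(\beta)h(\delta) \;+\; \sum_{(\beta,\delta) > (\alpha,\gamma)} g(\beta)h(\delta) \;=\; \lambda\, g(\alpha)\, h(\gamma).
\]
Since two elements of the product poset $S \times S_0$ are comparable precisely when both of their coordinates are comparable in the same direction, I will partition the neighborhood of $(\alpha,\gamma)$ in $D_{\widehat{S}}$ into three disjoint pieces according to whether $\beta = \alpha$, $\beta < \alpha$, or $\beta > \alpha$.

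For the piece $\beta = \alpha$, the admissible $\delta$'s are those comparable and unequal to $\gamma$, so the contribution is $g(\alpha)\bigl(\sum_{\delta < \gamma} h(\delta) + \sum_{\delta > \gamma} h(\delta)\bigr)$, which vanishes by the two sum-to-zero identities stated just above the proposition. For the piece $\beta < \alpha$, comparability forces $(\beta,\delta) < (\alpha,\gamma)$, which amounts to $\delta \leq \gamma$; the inner sum collapses via $\sum_{\delta \leq \gamma} h(\delta) = h(\gamma) + \sum_{\delta < \gamma} h(\delta) = h(\gamma)$. The piece $\beta > \alpha$ is handled symmetrically, producing the factor $h(\gamma)$ as well. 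Adding the three contributions gives $h(\gamma)\bigl(\sum_{\beta < \alpha} g(\beta) + \sum_{\beta > \alpha} g(\beta)\bigr)$, which by the eigenvector equation for $g$ on $D_S$ equals $h(\gamma) \cdot \lambda g(\alpha) = \lambda\, \widehat{g}((\alpha,\gamma))$, as required.

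To deduce \cref{thm:second_proof} from this proposition, I plan to note that the linear map $g \mapsto g \otimes h$ is injective (since $h \not\equiv 0$), so a basis of the $\lambda$-eigenspace of $D_S$ lifts to a linearly independent family in the $\lambda$-eigenspace of $D_{\widehat{S}}$. Because adjacency matrices are real symmetric, hence diagonalizable, the algebraic multiplicity of every $\lambda$ in $f_S$ is bounded above by its multiplicity in $f_{\widehat{S}}$, yielding $f_S \mid f_{\widehat{S}}$. I do not expect any real obstacle here: the entire argument hinges on the two cancellation identities for $h$ on $S_0$, which are exactly what make this naive tensor construction succeed; the remaining work is purely bookkeeping of the three neighborhood cases in the product order.
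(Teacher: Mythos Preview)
Your proof is correct and follows essentially the same approach as the paper: both verify the eigenvector equation for $\widehat g$ directly by decomposing the neighborhood of $(\alpha,\gamma)$ in the product order and using the vanishing identities $\sum_{\delta<\gamma}h(\delta)=\sum_{\delta>\gamma}h(\delta)=0$. The only cosmetic difference is organizational: the paper treats the sums $\sum_{\widehat\beta<\widehat\alpha}$ and $\sum_{\widehat\beta>\widehat\alpha}$ separately, splitting each by which coordinate is strict, whereas you partition the full neighborhood at once by the trichotomy $\beta=\alpha$, $\beta<\alpha$, $\beta>\alpha$; your deduction of \cref{thm:second_proof} via injectivity of $g\mapsto g\otimes h$ and diagonalizability is likewise the paper's argument.
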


\begin{proof}
Let $\widehat{\alpha} = (\alpha, \gamma)$ where $\alpha \in S$ and $\gamma \in S_0.$ We need to show that 
\[ \sum_{\widehat{\beta} < \widehat{\alpha}} \widehat{g}(\widehat{\beta}) + \sum_{\widehat{\beta} > \widehat{\alpha}} \widehat{g}(\widehat{\beta})  = \lambda \widehat{g}(\widehat{\alpha}).\]
For $\widehat{\beta}  = (\beta, \epsilon)$, we have $\widehat{\beta} < \widehat{\alpha}$ if and only if exactly one of the following conditions holds  

\begin{enumerate}
    \item $\alpha=\beta$ and $\epsilon < \gamma$. 
    \item $\epsilon = \gamma$ and $\beta< \alpha$.
    \item $\beta< \alpha$ and $\epsilon < \gamma.$
\end{enumerate}

Consequently, we can rewrite 

\begin{align*}
\sum_{\widehat{\beta} < \widehat{\alpha}} \widehat{g}(\widehat{\beta}) &= \sum_{\epsilon < \gamma} \widehat{g}(\alpha, \epsilon) + \sum_{\beta < \alpha} \widehat{g}(\alpha, \gamma) + \sum_{\epsilon < \gamma, \beta<\alpha} \widehat{g}(\beta, \epsilon) \\ 
& = g(\alpha) \sum_{\epsilon < \gamma} h(\epsilon) + h(\gamma) \sum_{\beta< \alpha} g(\beta) + \left(\sum_{\epsilon<\gamma} h(\epsilon) \right) \left(\sum_{\beta<\alpha} g(\beta) \right) \\
& =  h(\gamma) \sum_{ \beta < \gamma} g(\beta).
\end{align*}
The last equality follows from the fact that $\sum_{\epsilon <\gamma}h(\epsilon)=0.$ Similarly, we have 
\[ \sum_{\widehat{\alpha}<\widehat{\beta} } \widehat{g}(\widehat{\beta})  = h(\gamma) \sum_{\alpha< \beta} g(\beta).\]
Summing up and using the fact that $g$ is an eigenvector associated with $\lambda$ we conclude that 
\[ \sum_{\widehat{\beta} < \widehat{\alpha}} \widehat{g}(\widehat{\beta}) + \sum_{\widehat{\beta} > \widehat{\alpha}} \widehat{g}(\widehat{\beta}) = h(\gamma) \left( \sum_{ \beta < \gamma} g(\beta) +\sum_{ \alpha < \beta} g(\beta) \right) =  \lambda h(\gamma) g(\alpha) = \lambda \widehat{g}(\widehat{\alpha}).\] 
We conclude that $\widehat{g}$ is an eigenvector of $D_{\widehat{S}}$ associated with $\lambda.$
\end{proof}

We can now provide a proof of \cref{thm:second_proof}. 
\begin{proof}
Since $D_{S}$ is undirected, its adjacency matrix is diagonalizable. Let $\{g_1, g_2, \ldots, g_m\}$ be a complete system of eigenvectors associated with the system of eigenvalues $\{\lambda_1, \lambda_2,  \ldots, \lambda_m\}$ (here $m=|S|$). For each $1 \leq i \leq m$, let $\widehat{g}_i = g_i \otimes h $ as discussed in \cref{prop:second_proof}. Then $\widehat{g}_i$ is an eigenvector for $D_{\widehat{S}}$ associated with the eigenvalue $\lambda_i.$ Furthermore, by a standard property of the tensor product, the system $\{\widehat{g}_i\}_{i=1}^m$ is linearly independent. This shows that $f_{S} \mid f_{\widehat{S}}.$
\end{proof}

\begin{rem}
The analogues of Theorems 3.1 and 3.2 for partially ordered sets hold (one
proof for 3.1 given above). \ Specifically, let $(S,\leq )$ be a partially
ordered set, and write $S^{\prime }=S\times \{0,1\}$, which we order by $%
(s,a)\leq (t,b)\leftrightarrow s\leq t\ \&~a\leq b$, and use $f_{S}$ for the
characteristic polynomial of the compatibility graph corresponding to $S.$
Then the analogue of 3.1 is $f_{S}~|~f_{S^{\prime \prime }}$ and that of 3.2
is $f_{S^{\prime }}^{2}~|~f_{S^{\prime \prime \prime }}$. \ Both of these
can be proved by essentially the same arguments as those presented in
Theorems 3.1 and 3.2.
\end{rem}
\subsection{Special eigenvalues}
In this section, we discuss some special eigenvalues for $D_n.$ While the existence of these eigenvalues typically follow from \cref{thm:main}, and we may omit proofs below where this is clearly the case, we discuss some explicit eigenvectors which might be of independent interest. We also include some  numerical data for the multiplicity of these eigenvalues which seem to follow some interesting patterns.

\subsubsection{Eigenvalue $-2$}
Let $\lambda$ be a real number. Then $\lambda$ is an eigenvalue of $D_n$ if there is a vector $(v_d)_{d|n}$ such that 
\[ \sum_{(d,m) \in E(D_n)} v_m = \lambda v_d. \]
The case $\lambda=-2$ is special because we can rewrite this as 
\begin{equation} \label{eq:moebius}
\sum_{m|d} v_m + \sum_{d|m|n} v_m =0.
\end{equation}
We will use this fact to construct an eigenvector associated with $\lambda =-2$ under some conditions on $n.$

Let $\mu$ be the Moebius function. This function has the property that for each positive integer $m$
   \[
\sum_{d|m} \mu(m) =
\begin{cases} 
    1 & \text{if } m=1 \\
    0 & \text{else. }
\end{cases}
\]

\begin{prop}
    Suppose that $\mu(n)=-1$. Let $v=(v_d)_{d|n}$ be the vector defined by 
   \[
v_d =
\begin{cases} 
    0 & \text{if } d \in \{1, n\} \\
    \mu(d) & \text{else. }
\end{cases}
\]
Then $v$ is an eigenvector associated with the eigenvalue $\lambda =-2.$
    
\end{prop}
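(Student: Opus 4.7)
The plan is to verify the eigenvalue equation directly, working from the reformulation provided in \eqref{eq:moebius}: $\lambda=-2$ is an eigenvalue for a vector $(v_d)_{d\mid n}$ precisely when, for every divisor $d$ of $n$,
\[ \sum_{m\mid d} v_m + \sum_{d\mid m\mid n} v_m = 0, \]
since the neighbors of $d$ in $D_n$ are the proper divisors and proper multiples of $d$ in $\Div(n)$, and the $-2v_d$ arising from folding $d$ into both sums absorbs $\lambda v_d = -2v_d$. So I only need to check this identity for the prescribed vector.

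I would split into three cases on $d$. For $d=1$ the first sum is just $v_1 = 0$, while the second is $\sum_{m\mid n} v_m = \sum_{1<m<n}\mu(m)$ since $v_1=v_n=0$. Using $\sum_{m\mid n}\mu(m)=0$ (valid because $n>1$, as $\mu(n)=-1$ forces $n$ squarefree with an odd number of prime factors, hence $n\neq 1$), this difference equals $-\mu(1)-\mu(n)=-1-(-1)=0$. The case $d=n$ is symmetric. The nontrivial case is $1<d<n$, where $v_d=\mu(d)$, and I would compute the two sums as
\[ \sum_{m\mid d} v_m = \Big(\sum_{m\mid d}\mu(m)\Big) - \mu(1) = 0 - 1 = -1, \]
\[ \sum_{d\mid m\mid n} v_m = \Big(\sum_{d\mid m\mid n}\mu(m)\Big) - \mu(n) = 0 - (-1) = 1, \]
so they cancel. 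The inner sum in the second line is zero because substituting $m=dk$ with $k\mid n/d$ and using that $n$ is squarefree gives $\mu(dk)=\mu(d)\mu(k)$, reducing to $\mu(d)\sum_{k\mid n/d}\mu(k)=0$ (valid since $n/d>1$).

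The main conceptual point — and the only thing worth highlighting — is that the "defects" introduced by zeroing $v_1$ and $v_n$ (rather than letting them equal $\mu(1)=1$ and $\mu(n)=-1$) exactly cancel in every case, precisely because $\mu(1)+\mu(n)=0$ under the hypothesis $\mu(n)=-1$. This is the reason the hypothesis is stated with $\mu(n)=-1$ rather than simply assuming $n$ is squarefree. There is no real obstacle: once the eigenvalue equation is recast via \eqref{eq:moebius}, the proof is a short bookkeeping argument on Möbius sums. I would close by observing that the vector is nonzero whenever $n$ has a prime divisor $p$ with $1<p<n$, i.e., whenever $\Omega(n)\ge 2$; under the standing hypothesis $\mu(n)=-1$ this is automatic unless $n$ is prime.
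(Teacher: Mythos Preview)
Your proof is correct and follows essentially the same approach as the paper's: both verify \eqref{eq:moebius} by splitting into the cases $d=1$, $d=n$, and $1<d<n$, and both reduce the resulting expressions to the standard identities $\sum_{m\mid d}\mu(m)=0$ (for $d>1$) and $\sum_{d\mid m\mid n}\mu(m)=\mu(d)\sum_{k\mid n/d}\mu(k)=0$. Your closing remark that the vector is zero when $n$ is prime (so one needs $\Omega(n)\ge 2$ for it to be a genuine eigenvector) is a worthwhile addition that the paper's proof of this proposition omits.
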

\begin{proof}
    We verify that \cref{eq:moebius} holds for all $d|n.$ If $d \not \in \{1, n\}$ then we have 
\[ \sum_{m|d} v_m + \sum_{d|m|n} v_m = \sum_{m|d} \mu(m) - \mu(1) + \mu(d) \sum_{m \mid \frac{n}{d}} \mu(m) - \mu(n) =0.\] 

    If $d=1$ then \cref{eq:moebius} becomes 
    \[ v_1 + \sum_{m|n} v_m = \sum_{d|n} \mu(d) - \mu(1) - \mu(n)=0.\]
    Similarly, we can check that $\cref{eq:moebius}$ holds for $d=n.$
\end{proof}
Let $n$ be a squarefree number and $\omega(n)$ the distinct number of prime factors of $n.$ The following table records the multiplicity of $\lambda=-2$ as an eigenvalue for $D_n.$
\vspace{0.5em}

\begin{table}[h]
\begin{tabular}{|c|c|c|c|c|c|c|c|c|c|c|c|c|}
\hline
$\omega(n)$ &2 & 3 & 4 & 5 & 6 & 7 & 8 & 9 & 10 & 11 & 12 & 13 \\
\hline
$m_{-2}$ &0 & 2 & 0 & 10 & 0 & 42 & 0 & 170 & 0 & 682 & 0 & 2730 \\
\hline
\end{tabular}
\caption{Multiplicity of $-2$ as an eigenvalue for $D_n$}
\end{table}
\begin{rem}
It seems that $-2$ is an eigenvalue if and only $\mu(n)=(-1)^{\omega(n)}=-1.$ Furthermore, a quick look on the The On-Line Encyclopedia of Integer Sequences reveals that the values  
\[2, 10, 42, 170, 682, 2730 \]
are the first few terms of the sequence A020988 (see \url{https://oeis.org/A020988}). We note that these terms satisfy the linear recurrence $x_{n+1}=4x_n+2$.

\end{rem}
\subsubsection{Eigenvalue $-1$}
\begin{lem} \label{lem:clique_1}
    Let $X$ be a clique set in a connected graph $G$. Suppose further that both $X$ and $G \setminus X$ are homogeneous sets in $G.$ Then $-1$ is an eigenvalue of $G$ with multiplicity at least $|X|-1.$
\end{lem}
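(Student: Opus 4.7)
The plan is to exhibit an explicit $(|X|-1)$-dimensional subspace of the $(-1)$-eigenspace of the adjacency matrix $A$ of $G$. The idea is that the two homogeneity assumptions, combined with connectedness, force the bipartite part between $X$ and $V(G)\setminus X$ to be complete, after which supported-on-$X$ eigenvectors for $-1$ are easy to produce using the clique structure of $X$.

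The first step, which is the only nontrivial structural observation, is to show that (in the nontrivial case $V(G)\setminus X \neq \emptyset$) every vertex of $X$ is adjacent to every vertex of $V(G)\setminus X$. Since $G$ is connected and both $X$ and $V(G)\setminus X$ are nonempty, there exists an edge $x_0 y_0$ with $x_0 \in X$ and $y_0 \in V(G)\setminus X$. Applying the homogeneity of $V(G)\setminus X$ to $x_0$ yields that $x_0$ is adjacent to all of $V(G)\setminus X$; applying the homogeneity of $X$ to $y_0$ yields that $y_0$ is adjacent to all of $X$. For any other $x \in X$, the edge $xy_0$ now exists, so the homogeneity of $V(G)\setminus X$ applied to $x$ forces $x$ to be adjacent to every vertex of $V(G)\setminus X$. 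Thus the join between $X$ and its complement is complete. (If $V(G)\setminus X = \emptyset$, this step is vacuous and $G$ itself is a clique.)

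Next, I would consider the subspace
\[
W = \Bigl\{ v \in \R^{V(G)} \,:\, v_y = 0 \text{ for all } y \notin X,\ \sum_{x \in X} v_x = 0 \Bigr\},
\]
which has dimension $|X|-1$. For any $v \in W$ I would compute $Av$ coordinate by coordinate. For $x \in X$, since $X$ is a clique and $v$ vanishes off $X$,
\[
(Av)_x \;=\; \sum_{x' \in X,\, x' \neq x} v_{x'} \;=\; -v_x,
\]
using the zero-sum condition. For $y \in V(G)\setminus X$, the complete bipartite structure established in the first step gives
\[
(Av)_y \;=\; \sum_{x \in X} v_x \;=\; 0 \;=\; -v_y.
\]
Hence $Av = -v$ for every $v \in W$, and consequently the multiplicity of $-1$ as an eigenvalue of $A$ is at least $\dim W = |X|-1$.

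The heart of the argument is the short adjacency chase in the first paragraph; once the complete join is in hand, the eigenvector construction is immediate. I do not anticipate any real obstacle, but I would take care to handle the degenerate cases $|X|=1$ (where the bound $|X|-1=0$ is trivial) and $V(G)\setminus X = \emptyset$ (where $G = K_{|X|}$ has $-1$ as an eigenvalue of multiplicity exactly $|X|-1$) to confirm the statement remains correct at the boundary.
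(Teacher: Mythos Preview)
Your proof is correct and follows essentially the same approach as the paper: both exhibit the $(|X|-1)$-dimensional subspace of zero-sum vectors supported on $X$ and verify it lies in the $(-1)$-eigenspace via the block structure of the adjacency matrix. Your argument is in fact slightly more careful, since you explicitly derive the complete join between $X$ and $V(G)\setminus X$ from the homogeneity and connectedness hypotheses, whereas the paper simply asserts this structure and cites an external reference.
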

\begin{proof}
By definition, $G$ is a joined union of the induced graph on $X$ and $G \setminus X.$  The conclusion then follows from the proof of \cite[Proposition 3.1]{join_graphs}. More specifically, the adjacency matrix of $G$ is of the form 
\[ A= \begin{pmatrix}
A_{X}& \ones \\ \ones & A_{G \setminus X}
\end{pmatrix} .\] 
Let $v=(v_1, \ldots, v_{|X|})$ be a vector such that $\sum_{i=1}^{|X|}v_i=0.$ Then, $w = (v_1, \ldots, v_{|X|}, 0 , \ldots, 0)$ is an eigenvector for $A_G$ associated with the eigenvalue $\lambda=-1.$
\end{proof}

\begin{cor}
    $-1$ is an eigenvalue of $D_n$ for every $n \geq 2.$
\end{cor}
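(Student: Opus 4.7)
The plan is to apply Lemma \ref{lem:clique_1} with the natural choice of clique $X = \{1, n\}$ in $D_n$. First I would verify that $X$ is indeed a clique: since $1 \mid n$, the vertices $1$ and $n$ are adjacent in $D_n$, so $X$ is a clique of size $2$.

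Next I would check the homogeneity hypotheses. Every divisor $d$ of $n$ satisfies $1 \mid d$ and $d \mid n$, so every vertex in $D_n \setminus X$ is adjacent to both $1$ and $n$. This immediately shows that $X$ is a homogeneous set. Symmetrically, each of $1$ and $n$ is adjacent to every vertex of $D_n \setminus X$, so $D_n \setminus X$ is also a homogeneous set. Connectedness of $D_n$ has already been established (the vertex $1$ is adjacent to every other vertex, so $D_n$ is even of diameter at most $2$). Thus all hypotheses of Lemma \ref{lem:clique_1} are met, and we conclude that $-1$ is an eigenvalue of $D_n$ with multiplicity at least $|X| - 1 = 1$.

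The only mild subtlety is the boundary case $n = p$ prime, where $D_n \setminus X$ is empty. If one is uncomfortable treating the empty set as a homogeneous set, one may simply verify this case directly: the adjacency matrix of $D_p$ is $\bigl(\begin{smallmatrix} 0 & 1 \\ 1 & 0 \end{smallmatrix}\bigr)$, which has $(1,-1)$ as an eigenvector with eigenvalue $-1$. I do not anticipate any real obstacle; the main point is recognizing that $\{1,n\}$ is the canonical clique that exhibits the join structure of $D_n$.
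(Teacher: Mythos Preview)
Your proof is correct and follows exactly the paper's approach: the paper's entire proof is the single sentence ``$X=\{1,n\}$ satisfies the conditions of \cref{lem:clique_1}.'' Your version simply spells out the verification of the hypotheses (and the harmless boundary case $n$ prime) in more detail.
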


\begin{proof}
    $X=\{1, n\}$ sastifies the conditions of \cref{lem:clique_1}.
\end{proof}

The following table records the multiplicity of $\lambda=-1$ as an eigenvalue for $D_n$ where $n$ is a squarefree number. 
\begin{table}[h]

\begin{tabular}{|c|c|c|c|c|c|c|c|c|c|c|c|c|}
\hline
$\omega(n)$ &2 & 3 & 4 & 5 & 6 & 7 & 8 & 9 & 10 & 11 & 12 & 13 \\
\hline
$m_{-1}$ &1 & 3 & 4 & 10 & 15 & 35 & 56 & 126 & 212 & 462 & 814 & 1716 \\
\hline
\end{tabular}
\caption{Multiplicity of $-1$ as an eigenvalue for $D_n$}
\end{table}

 \subsubsection{Eigenvalue $1$}
 \begin{prop}
    Suppose that $\mu(n)=-1$. Then $1$ is an eigenvalue of $D_n.$
 \end{prop}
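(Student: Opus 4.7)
The plan is to argue by induction on $\omega(n)$, using \cref{thm:main} as the key lifting tool. The hypothesis $\mu(n) = -1$ is equivalent to $n$ being squarefree with an odd number of distinct prime divisors, so first I would write $n = p_1 p_2 \cdots p_{2k+1}$ for some $k \geq 0$.

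The base case $k = 0$ is essentially trivial: when $n = p$ is prime, $D_n \cong K_2$ has characteristic polynomial $\lambda^2 - 1$, and $(1,1)^T$ is an explicit eigenvector for $\lambda = 1$.

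For the inductive step with $k \geq 1$, I would set $m := p_1 \cdots p_{2k-1}$. Since $\omega(m) = 2k-1$ is odd, $\mu(m) = -1$, and by construction $n = m \cdot p_{2k} p_{2k+1}$ with $\gcd(m, p_{2k} p_{2k+1}) = 1$. The inductive hypothesis gives that $1$ is an eigenvalue of $D_m$, and \cref{thm:main} then yields $f_m \mid f_n$, so $1$ remains an eigenvalue of $D_n$.

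In keeping with the explicit-eigenvector theme of this section, I would note that the same induction can be run via \cref{prop:second_proof} to produce an explicit eigenvector. Starting from $g_0 = (1,1)^T$ on $D_{p_1}$ for $\lambda = 1$, and repeatedly tensoring with the distinguished eigenvector $h$ on $S_0$ (which has eigenvalue $0$), after $k$ iterations one obtains an eigenvector of $D_n$ for $\lambda = 1$ whose value at a divisor $d = \prod_{i=1}^{2k+1} p_i^{e_i}$ is $\prod_{i=1}^{k}(e_{2i+1} - e_{2i})$. The main obstacle is essentially nonexistent here, since the structural work is already housed in \cref{thm:main} and \cref{prop:second_proof}; the only verification needed at each inductive step is that removing two primes preserves the parity of $\omega$, so that $\mu(m) = -1$ continues to hold.
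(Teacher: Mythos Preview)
Your proposal is correct and matches the paper's intended approach: the paper does not give an explicit proof of this proposition, but states beforehand that the existence of these special eigenvalues ``typically follow from \cref{thm:main}, and we may omit proofs below where this is clearly the case,'' which is exactly the inductive use of $f_m \mid f_{mpq}$ you carry out. Your additional remark on the explicit eigenvector via \cref{prop:second_proof} is also in keeping with the paper's theme and is correct.
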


 The following table records the multiplicity of $\lambda=1$ as an eigenvalue for $D_n$ where $n$ is a squarefree number. 

 \begin{table}[h]
\centering
\begin{tabular}{|c|c|c|c|c|c|c|c|c|c|c|c|c|}
\hline
$\omega(n)$ & 2 & 3 & 4 & 5 & 6 & 7 & 8 & 9 & 10 & 11 & 12 & 13 \\
\hline
$m_{1}$ &0 & 2 & 0 & 5 & 0 & 14 & 0 & 42 & 0 & 132 & 0 & 429 \\
\hline
\end{tabular}
\caption{Multiplicity of $1$ as an eigenvalue for $D_n$}
\end{table}

\subsubsection{Eigenvalue $0$}
\begin{prop}
    Suppose that $\mu(n)=1$. Then $0$ is an eigenvalue of $D_n.$
 \end{prop}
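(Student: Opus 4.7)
The plan is to induct on $k := \omega(n)/2$. The hypothesis $\mu(n) = 1$ forces $n$ to be squarefree with an even number $\omega(n) = 2k$ of distinct prime factors, so this induction exhausts all cases covered by the proposition.

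For the base case I would take $k = 1$, i.e.\ $n = pq$, and exhibit explicitly the vector $v$ on $\Div(pq)$ defined by $v_1 = v_{pq} = 0$, $v_p = 1$, $v_q = -1$. A four-entry check shows that the sum of $v$ over the neighbors of each divisor vanishes: at $1$ and at $pq$ the sum is $v_p + v_q = 0$, and at $p$ (resp.\ $q$) the only neighbors are $1$ and $pq$, which contribute $0$. Hence $v$ lies in the kernel of the adjacency matrix of $D_{pq}$. (The edge case $k=0$, $n=1$, is trivial since the adjacency matrix is the $1\times 1$ zero matrix.)

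For the inductive step with $k \geq 2$, I pick any two prime divisors $p, q$ of $n$ and set $m = n/(pq)$. Then $m$ is squarefree with $\gcd(m,pq) = 1$ and $\omega(m) = 2(k-1)$, so $\mu(m) = 1$. By the inductive hypothesis, $f_m(0) = 0$; by \cref{thm:main}, $f_m \mid f_n$; hence $f_n(0) = 0$ and $0$ is an eigenvalue of $D_n$. The only real work is in the base case, and that is the small explicit verification above; the inductive step is an immediate appeal to \cref{thm:main}.

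If I wanted an explicit eigenvector rather than a polynomial-divisibility argument, I would proceed constructively via \cref{prop:second_proof}: pair the primes of $n$ as $(p_1, p_2), (p_3, p_4), \ldots, (p_{2k-1}, p_{2k})$, identify $\Div(n)$ with $\Div(p_1p_2) \times \cdots \times \Div(p_{2k-1}p_{2k})$ as posets, and take the iterated tensor product of $k$ copies of the base-case eigenvector (which is precisely the function $h$ appearing just before \cref{prop:second_proof}). Applying \cref{prop:second_proof} repeatedly with $\lambda = 0$ shows that this tensor product is an eigenvector of $D_n$ with eigenvalue $0$, and gives the explicit formula $v_d = \prod_{i=1}^{k} h\bigl(\epsilon_{2i-1}(d), \epsilon_{2i}(d)\bigr)$, where $\epsilon_j(d) \in \{0,1\}$ records whether $p_j$ divides $d$.
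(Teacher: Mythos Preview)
Your proof is correct and matches the paper's intended approach: the paper explicitly omits a proof here, noting at the start of the section that such statements ``typically follow from \cref{thm:main}'', which is exactly the inductive mechanism you invoke, with the same base case $n=pq$ (your vector $v$ is the function $h$ defined just before \cref{prop:second_proof}). Your optional constructive version via iterated tensor products of $h$ is likewise the explicit realization the paper alludes to in its discussion around \cref{prop:second_proof} and \cref{cor:inclusion}.
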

 The following table records the multiplicity of $\lambda=0$ as an eigenvalue for $D_n$ where $n$ is a squarefree number. 
 \begin{table}[h]
\begin{tabular}{|c|c|c|c|c|c|c|c|c|c|c|c|c|}
\hline
$\omega(n)$ &2 & 3 & 4 & 5 & 6 & 7 & 8 & 9 & 10 & 11 & 12 & 13 \\
\hline
$m_{0}$ &1 & 0 & 2 & 0 & 5 & 0 & 14 & 0 & 42 & 0 & 132 & 0 \\
\hline
\end{tabular}
\caption{Multiplicity of $0$ as an eigenvalue for $D_n$}
 \end{table}
\begin{rem}
It seems that $0$ (respectively $1$) is an eigenvalue if and only $\mu(n)=1$ (respectively $\mu(n)=-1)$. Furthermore, a quick look on the The On-Line Encyclopedia of Integer Sequences reveals that the values  $2, 5, 14, 42, 132, 429$ are the first few terms of the Catalan sequence A000108 (see \url{https://oeis.org/A000108}). This observation has also been discovered independently in \cite{Catalan}. 

\end{rem}

We provide below an partial explanation for this phenomenon. Let $n$ be a squarefree number such that $\omega(n)=2m.$ We recall from the discussion before the proof of \cref{prop:second_proof} that an eigenvector for $D_n$ can be considered as a function on the powerset $\{0,1\}^{2m} \to \C$ such that for all $\alpha \in \{0,1\}^{2m}$
\begin{equation*} \sum_{\beta<\alpha} f(\beta) + \sum_{\beta>\alpha} f(\beta) = \lambda f(\alpha). 
\end{equation*}
When $\alpha=0$, this condition becomes 
\[ \sum_{\beta<\alpha} f(\beta) + \sum_{\beta>\alpha} f(\beta) = 0. \]
Let $V_{m}$ be the space of all functions $f: \{0,1\}^{2m} \to \C$ such that for all $\alpha \in \{0,1\}^{2m}$
\[ \sum_{\beta<\alpha} f(\beta) = \sum_{\beta>\alpha} f(\beta)=0. \]
By the above discussion, $V_m$ is a subspace of the nullspace of the adjacency matrix for $D_n.$ We have the following observation whose proof is identical to the one given in \cref{prop:second_proof}.
\begin{lem}
Let $2m = 2m_1 + 2m_2$. For $f_1 \in V_{m_1}$ and $f_2 \in V_{m_2}$, let $f$ be the tensor product of $f_{1}$ and $f_2$ as defined in \cref{def:tensor}. Then $f \in V_{m}.$ 
\end{lem}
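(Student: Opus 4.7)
The plan is to follow the same calculation as in the proof of \cref{prop:second_proof}, now using the product structure $\{0,1\}^{2m} \cong \{0,1\}^{2m_1} \times \{0,1\}^{2m_2}$ as posets under the product order. Write $\alpha = (\alpha_1,\alpha_2)$ and $\beta = (\beta_1,\beta_2)$; the task is to check, for every $\alpha$, that both $\sum_{\beta<\alpha} f(\beta) = 0$ and $\sum_{\beta>\alpha} f(\beta) = 0$.

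The cleanest route exploits the multiplicativity of $f = f_1 \otimes f_2$. Since $\beta \leq \alpha$ in the product poset is equivalent to the conjunction $\beta_1 \leq \alpha_1$ and $\beta_2 \leq \alpha_2$, one has the factorization
\begin{equation*}
\sum_{\beta \leq \alpha} f(\beta) \;=\; \Bigl(\sum_{\beta_1 \leq \alpha_1} f_1(\beta_1)\Bigr)\Bigl(\sum_{\beta_2 \leq \alpha_2} f_2(\beta_2)\Bigr).
\end{equation*}
The assumption $f_i \in V_{m_i}$ gives $\sum_{\beta_i < \alpha_i} f_i(\beta_i) = 0$, hence $\sum_{\beta_i \leq \alpha_i} f_i(\beta_i) = f_i(\alpha_i)$. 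Multiplying these two identities yields $\sum_{\beta \leq \alpha} f(\beta) = f_1(\alpha_1) f_2(\alpha_2) = f(\alpha)$; subtracting $f(\alpha)$ from both sides gives $\sum_{\beta < \alpha} f(\beta) = 0$. The identical argument with all inequalities reversed, using that $\sum_{\beta_i > \alpha_i} f_i(\beta_i) = 0$, establishes $\sum_{\beta > \alpha} f(\beta) = 0$, so $f \in V_m$.

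No serious obstacle is anticipated: the statement is purely formal and is essentially a repackaging of the tensor-product computation already performed in \cref{prop:second_proof}, with the role of the special function $h$ on $S_0$ now being played by an arbitrary element of $V_{m_2}$. The only mild care needed is the combinatorial decomposition of the order relation on a product poset, which is routine.
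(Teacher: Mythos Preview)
Your proof is correct and follows essentially the same approach as the paper, which simply remarks that the argument is identical to that of \cref{prop:second_proof}: both exploit the multiplicativity of the tensor product over the product poset. Your use of the factorization $\sum_{\beta\le\alpha}f(\beta)=\bigl(\sum_{\beta_1\le\alpha_1}f_1(\beta_1)\bigr)\bigl(\sum_{\beta_2\le\alpha_2}f_2(\beta_2)\bigr)$ is a slightly cleaner packaging than the three-case decomposition of the strict inequality used in \cref{prop:second_proof}, but the underlying idea is the same.
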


\begin{cor} \label{cor:inclusion}
    For each $(m_1, m_2)$ such that $2m= 2m_1 + 2m_2$, we have $V_{m_1} \otimes V_{m_2} \subset V_{m}.$
\end{cor}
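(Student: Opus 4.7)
The plan is to argue that this corollary is an essentially formal consequence of the preceding lemma combined with the linearity of the defining conditions of $V_m$. I would first recall that $V_m$ is defined by a system of linear equations (one for each $\alpha \in \{0,1\}^{2m}$, demanding that both $\sum_{\beta<\alpha} f(\beta)$ and $\sum_{\beta>\alpha} f(\beta)$ vanish), so $V_m$ is a linear subspace of the space of $\mathbb{C}$-valued functions on $\{0,1\}^{2m}$. Consequently, to show that $V_{m_1} \otimes V_{m_2} \subset V_m$, it suffices by linearity to verify the containment on pure tensors.

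Next I would observe that, under the canonical identification $\{0,1\}^{2m_1} \times \{0,1\}^{2m_2} \cong \{0,1\}^{2m}$ induced by $2m = 2m_1 + 2m_2$, a pure tensor $f_1 \otimes f_2$ with $f_1 \in V_{m_1}$ and $f_2 \in V_{m_2}$ is precisely the function whose value at a pair is the product of the values of $f_1$ and $f_2$, in the sense of \cref{def:tensor}. The previous lemma then states exactly that $f_1 \otimes f_2 \in V_m$ for any such $f_1, f_2$.

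Finally, I would close the argument by noting that $V_{m_1} \otimes V_{m_2}$, by definition, is the subspace of functions on $\{0,1\}^{2m}$ spanned by these pure tensors $f_1 \otimes f_2$ with $f_1 \in V_{m_1}$, $f_2 \in V_{m_2}$. Since each such pure tensor lies in the linear subspace $V_m$, and any linear combination of elements of $V_m$ remains in $V_m$, we conclude $V_{m_1} \otimes V_{m_2} \subset V_m$.

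There is really no substantive obstacle here: the real content of the statement was already absorbed in the preceding lemma, and the corollary is purely a passage from pure tensors to their linear span. The only thing to be slightly careful about is making the identification $\{0,1\}^{2m_1} \times \{0,1\}^{2m_2} \cong \{0,1\}^{2m}$ explicit, and checking that under this identification the product partial order restricts correctly so that the hypothesis of the lemma is literally applicable; but this is immediate from the definition of the product partial order used throughout \cref{subsec:equivalent}.
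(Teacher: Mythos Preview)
Your proposal is correct and matches the paper's approach: the paper gives no separate proof for this corollary, treating it as an immediate consequence of the preceding lemma, and your write-up simply makes explicit the (routine) passage from pure tensors to their linear span using the linearity of the defining conditions of $V_m$.
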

We remark while \cref{cor:inclusion} sheds some insights, it is not sufficient to explain the dimension growth of the nullspace for $D_n.$ 

        
\subsection{The case where $n$ has at most two prime factors}

If $n$ is a prime power then $D_n$ is a complete graph. In this case, the spectrum of $D_n$ is well-known. In this section, we consider the next simplest case: namely $n$ is a product of two prime numbers. First, we consider the case where $n=pq^a$ where $p,q$ are distinct primes. 
\begin{prop} 
Let $M_{a}$ be an adjacency matrix for $D_{pq^{a}}$ where $p,q$ are distinct
primes, $a\geq 0.$   \ Then $\det (M_{a})=\det (M_{a+6})$.
\end{prop}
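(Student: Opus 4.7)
The plan is to use the involution symmetry of $D_{pq^a}$ to block-diagonalize $M_a$ and reduce $\det(M_a)$ to $\det(W_a^2-I)$ for a very sparse matrix $W_a$, then to extract a small-order linear recurrence in $a$ whose characteristic polynomial divides $x^6-1$.

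First I would order the vertices of $D_{pq^a}$ as $1,q,\dots,q^a,p,pq,\dots,pq^a$, so that
\[
M_a=\begin{pmatrix}B&C\\C^T&B\end{pmatrix},\qquad B=J_{a+1}-I_{a+1},\quad C=U_{a+1},
\]
with $U_{a+1}$ the upper-triangular matrix of $1$'s. The graph automorphism $q^i\leftrightarrow pq^{a-i}$ is realized by the block permutation matrix swapping the two halves and reversing within each half; since it commutes with $M_a$ and squares to the identity, it diagonalizes $M_a$ into two $(a+1)\times(a+1)$ blocks, giving
\[
\det(M_a)=\det(B+CK)\cdot\det(B-CK),
\]
where $K$ is the $(a+1)\times(a+1)$ reversal matrix. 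A direct check shows $CK$ is the symmetric Hankel matrix with $(CK)_{ij}=[i+j\le a]$, and it is invertible with $(CK)^{-1}=KV$, where $V$ is bidiagonal with $1$'s on the diagonal and $-1$'s on the superdiagonal. Factoring out $CK$ then yields $\det(M_a)=\det(CK)^2\det(W_a+I)\det(W_a-I)=\det(W_a^2-I)$, where $W_a:=(CK)^{-1}(J-I)$. A short computation produces the explicit form $W_a=e_0\mathbf{1}^{\!\top}-KV$: its first row is $(1,1,\ldots,1,0)$, and each subsequent row $i\ge 1$ has only two nonzero entries, namely $-1$ in column $a-i$ and $+1$ in column $a-i+1$.

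Second, using the sparsity of $W_a$, I would compute $\det(W_a\pm I)$ by repeated cofactor expansion along the rows $i\ge 1$ of $W_a\pm I$, each of which has at most three nonzero entries. This should produce a bounded-order linear recurrence for $a\mapsto\det(M_a)$. The computed initial values $\det(M_a)=-1,0,3,5,4,1$ for $a=0,1,\ldots,5$ give enough data to pin down the recurrence and check that $\det(M_6)=-1$, verifying the first instance of periodicity.

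The main obstacle will be extracting the recurrence in a clean form and then certifying that its characteristic polynomial divides $x^6-1$. The period $6$ can be anticipated on the spectral side: $(KV)^2$ is the tridiagonal Neumann--Dirichlet Laplacian on an $(a+1)$-chain, with eigenvalues $\lambda_k=4\sin^2\!\bigl(\tfrac{(2k+1)\pi}{2(2a+3)}\bigr)$, and the factorization
\[
1-\lambda_k \;=\; 2\cos\theta_k-1 \;=\; -4\sin\!\bigl(\tfrac{\theta_k}{2}+\tfrac{\pi}{6}\bigr)\sin\!\bigl(\tfrac{\theta_k}{2}-\tfrac{\pi}{6}\bigr),\qquad \theta_k=\tfrac{(2k+1)\pi}{2a+3},
\]
inserts the sixth roots of unity into the formula and is the ultimate source of the period-$6$ behavior.
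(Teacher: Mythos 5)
Your reduction in the first paragraph is sound: the involution $d\mapsto pq^a/d$ does commute with $M_a$, the blocks $B=J-I$ and $C K=KC^T$ behave as you say, and $\det(M_a)=\det(W_a^2-I)$ with $W_a=e_0\mathbf{1}^{\top}-KV$ is a legitimate (and genuinely different) starting point from the paper's. But from that point on the proposal is a plan, not a proof, and the decisive step is missing. You assert that cofactor expansion ``should produce a bounded-order linear recurrence'' in $a$, yet you never exhibit such a recurrence, its order, or its coefficients; note that the shape of $W_a$ changes with $a$ (the full first row, and row $i\ge 1$ having its nonzero entries in columns $a-i$, $a-i+1$), so a recursion relating $\det(W_a\pm I)$ to the same quantity for smaller $a$ is not automatic and would have to be set up (e.g.\ via a transfer-matrix bookkeeping of border minors) and proved. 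Without an explicit recurrence of known order $r$, checking the seven values $\det(M_0),\dots,\det(M_6)$ proves nothing about general $a$: to conclude periodicity you would need the recurrence itself plus agreement of the shifted sequence on $r$ consecutive terms. Moreover, the requirement that ``the characteristic polynomial divides $x^6-1$'' is stronger than what is needed and is also unsubstantiated; and the closing spectral remark does not repair the gap, because the angles you quote are eigenvalue data for $KV$ (equivalently $V^{\top}V$), whereas $W_a$ differs from $-KV$ by the rank-one term $e_0\mathbf{1}^{\top}$, so that formula does not compute $\det(W_a^2-I)$. As you present it, the period-$6$ statement is only ``anticipated,'' not established.

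For contrast, the paper's argument closes this loop with one finite computation: ordering the divisors of $pq^{a+6}$ so that the adjacency matrix has the block form $\left[\begin{smallmatrix} M_a & B\\ B^{T} & M_5\end{smallmatrix}\right]$ with $B=\left[\begin{smallmatrix}\mathbf{1}&\mathbf{1}\\ \mathbf{0}&\mathbf{1}\end{smallmatrix}\right]$ in $(a+1)\times 6$ blocks, it uses the Schur complement $\det(M_{a+6})=\det(M_5)\det\bigl(M_a-BM_5^{-1}B^{T}\bigr)$, and then verifies $\det(M_5)=1$ together with $BM_5^{-1}B^{T}=\mathbf{0}$ by checking that the four relevant block sums of entries of $M_5^{-1}$ vanish. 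If you want to salvage your route, you must either carry out and prove the recurrence you allude to (with explicit order and coefficients, then verify enough initial terms), or find a structural identity playing the role of $BM_5^{-1}B^{T}=\mathbf{0}$ inside your $\det(W_a^2-I)$ formulation.
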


\begin{proof}
We note that $\det (M_{5})=1,$ and explicitly that, with respect to the
vertex ordering  $1,q,\cdots ,q^{5},p,pq,\cdots ,pq^{5}$ 
\[
M_{5}=\left[ 
\begin{array}{cccccccccccc}
0 & 1 & 1 & 1 & 1 & 1 & 1 & 1 & 1 & 1 & 1 & 1 \\ 
1 & 0 & 1 & 1 & 1 & 1 & 0 & 1 & 1 & 1 & 1 & 1 \\ 
1 & 1 & 0 & 1 & 1 & 1 & 0 & 0 & 1 & 1 & 1 & 1 \\ 
1 & 1 & 1 & 0 & 1 & 1 & 0 & 0 & 0 & 1 & 1 & 1 \\ 
1 & 1 & 1 & 1 & 0 & 1 & 0 & 0 & 0 & 0 & 1 & 1 \\ 
1 & 1 & 1 & 1 & 1 & 0 & 0 & 0 & 0 & 0 & 0 & 1 \\ 
1 & 0 & 0 & 0 & 0 & 0 & 0 & 1 & 1 & 1 & 1 & 1 \\ 
1 & 1 & 0 & 0 & 0 & 0 & 1 & 0 & 1 & 1 & 1 & 1 \\ 
1 & 1 & 1 & 0 & 0 & 0 & 1 & 1 & 0 & 1 & 1 & 1 \\ 
1 & 1 & 1 & 1 & 0 & 0 & 1 & 1 & 1 & 0 & 1 & 1 \\ 
1 & 1 & 1 & 1 & 1 & 0 & 1 & 1 & 1 & 1 & 0 & 1 \\ 
1 & 1 & 1 & 1 & 1 & 1 & 1 & 1 & 1 & 1 & 1 & 0%
\end{array}%
\right] 
\]

and  
\[
M_{5}^{-1}=\left[ 
\begin{array}{cccccccccccc}
-2 & 1 & 2 & 1 & -1 & -2 & -2 & -1 & 1 & 2 & 1 & -1 \\ 
1 & -2 & -1 & 0 & 1 & 1 & 2 & 0 & -1 & -1 & 0 & 1 \\ 
2 & -1 & -4 & -1 & 2 & 3 & 3 & 2 & -2 & -3 & -1 & 2 \\ 
1 & 0 & -1 & -2 & 1 & 2 & 1 & 1 & 0 & -2 & -1 & 1 \\ 
-1 & 1 & 2 & 1 & -2 & -1 & -2 & -1 & 1 & 2 & 0 & -1 \\ 
-2 & 1 & 3 & 2 & -1 & -4 & -3 & -2 & 1 & 3 & 2 & -2 \\ 
-2 & 2 & 3 & 1 & -2 & -3 & -4 & -1 & 2 & 3 & 1 & -2 \\ 
-1 & 0 & 2 & 1 & -1 & -2 & -1 & -2 & 1 & 2 & 1 & -1 \\ 
1 & -1 & -2 & 0 & 1 & 1 & 2 & 1 & -2 & -1 & 0 & 1 \\ 
2 & -1 & -3 & -2 & 2 & 3 & 3 & 2 & -1 & -4 & -1 & 2 \\ 
1 & 0 & -1 & -1 & 0 & 2 & 1 & 1 & 0 & -1 & -2 & 1 \\ 
-1 & 1 & 2 & 1 & -1 & -2 & -2 & -1 & 1 & 2 & 1 & -2%
\end{array}%
\right] 
\]%
Order the vertices of $M_{a}$ as $1,q,\cdots ,q^{a},p,pq,\cdots ,pq^{a}$,
and the vertices of $M_{a+6}$ as \newline $1,q,\cdots ,q^{a},p,pq,\cdots
,pq^{a},q^{a+1},\cdots ,q^{a+6},pq^{a+1},\cdots pq^{a+6}$ . Now if we write $%
\mathbf{1}_{r,s}$ for an $r\times s$ matrix of $1$'s and $\mathbf{0}_{r,s}$
for an $r\times s$ matrix of $0$'s, we have 
\[
M_{a+6}=\left[ 
\begin{array}{cc}
A & B \\ 
C & D%
\end{array}%
\right] 
\]%
where $A=M_{a},B=$ $\left[ 
\begin{array}{cc}
\mathbf{1}_{a+1,6} & \mathbf{1}_{a+1,6} \\ 
\mathbf{0}_{a+1,6} & \mathbf{1}_{a+1,6}%
\end{array}%
\right] ,C=B^{T},$ and $D=M_{5}$. \ Since $D$ is invertible, 
\[
\det (M_{a+6})=\det (D)\det (A-BD^{-1}C).
\]%
But $\det (D)=1$, and using the explicit computation of $D^{-1}=M_{5}^{-1}$
above, $BD^{-1}C=\mathbf{0}_{2a+2,2a+2}$. \ (We can check this by verifying $%
\mathbf{1}_{1,12}D^{-1}\mathbf{1}_{12,1}=[\mathbf{0}_{1,6}\ \mathbf{1}%
_{1,6}]D^{-1}\mathbf{1}_{12,1}=\mathbf{1}_{1,12}D^{-1}\left[ 
\begin{array}{c}
\mathbf{0}_{6,1} \\ 
\mathbf{1}_{6,1}%
\end{array}%
\right] =[\mathbf{0}_{1,6}\ \mathbf{1}_{1,6}]D^{-1}\left[ 
\begin{array}{c}
\mathbf{0}_{6,1} \\ 
\mathbf{1}_{6,1}%
\end{array}%
\right] =0.$)\ \ Therefore $\det $($M_{a+6})=\det (A)=\det (M_{a})$.
\end{proof}

\begin{prop} 
For distinct primes $p,q$ and $a\geq 0$, $D_{pq^{a}}$ has eigenvalue $0$ if
and only if $a\equiv 1\mod{6}$.
\end{prop}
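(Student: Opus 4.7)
The plan is to leverage the preceding proposition, which establishes $\det(M_a) = \det(M_{a+6})$ for all $a \geq 0$. Since $0$ is an eigenvalue of $D_{pq^a}$ precisely when $\det(M_a) = 0$, this periodicity reduces the problem to checking the six residues $a \in \{0, 1, 2, 3, 4, 5\}$ modulo $6$, and showing that the determinant vanishes in exactly one of these cases, namely $a \equiv 1 \pmod 6$.

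First I would handle $a = 1$, which is the only case where a zero eigenvalue actually appears. Here $n = pq$ and $D_{pq}$ has vertex set $\{1, p, q, pq\}$. The two vertices $p$ and $q$ have identical neighborhoods $\{1, pq\}$ in $D_{pq}$, so the corresponding rows of $M_1$ are equal. Hence $\det(M_1) = 0$ (equivalently, the indicator of $p$ minus the indicator of $q$ is a null vector for $M_1$), confirming that $0$ is an eigenvalue.

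Next I would dispose of the remaining five cases $a \in \{0, 2, 3, 4, 5\}$ by direct computation. For $a = 0$ the graph is $K_2$ with $\det = -1$, and for $a = 5$ the preceding proposition already displays $\det(M_5) = 1$. The three intermediate cases $a \in \{2, 3, 4\}$ involve small adjacency matrices (of sizes $6, 8, 10$) for which the determinants can be evaluated straightforwardly, either by software or by hand using block row reductions analogous to those in the proof of the previous proposition; in each case one verifies the determinant is nonzero.

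The potential obstacle is purely the bookkeeping of the five explicit determinant computations, but each is routine given the periodicity result. Once all six base cases are tabulated, the periodicity $\det(M_{a+6}) = \det(M_a)$ immediately yields: $\det(M_a) = 0$ if and only if $a \equiv 1 \pmod 6$, which is exactly the claimed characterization of when $0$ is an eigenvalue of $D_{pq^a}$.
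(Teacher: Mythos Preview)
Your proposal is correct and follows essentially the same approach as the paper: invoke the periodicity $\det(M_{a+6})=\det(M_a)$ from the preceding proposition and then check the six base cases $a=0,\ldots,5$ directly (the paper simply lists $\det(M_a)=-1,0,3,5,4,1$). Your twin-vertex argument for $a=1$---that $p$ and $q$ have identical neighborhoods in $D_{pq}$, forcing equal rows---is a nice conceptual shortcut the paper does not spell out, but otherwise the arguments coincide.
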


\begin{proof}
After explicitly computing $\det (M_{a})=-1,0,3,5,4,1$ for $a=0,1,2,3,4,5$
respectively, this follows immediately from the previous proposition.
\end{proof}
The more general case is somewhat more involved. We can only prove the if direction. 

\begin{prop}
If $n=p^a q^b$ and $a \equiv b \equiv 1 \pmod{6}$ then $0$ is an eigenvalue of $D_n.$
\end{prop}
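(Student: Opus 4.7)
The plan is to adapt the determinant-periodicity argument from the preceding proposition (which handled $n = pq^a$) to the two-exponent setting, reducing the claim to the base case $a = b = 1$.

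First, for $a = b = 1$ the graph $D_{pq}$ has a $4 \times 4$ adjacency matrix and, with vertex ordering $1, p, q, pq$, the vector $(0, 1, -1, 0)$ is immediately seen to be a null vector, so $\det M_{1,1} = 0$ and $0$ is an eigenvalue. For the inductive step, the goal is to establish a periodicity
\begin{equation*}
\det(M_{a+6, b}) = c_b \cdot \det(M_{a, b})
\end{equation*}
for a nonzero constant $c_b$ depending only on $b$, where $M_{a, b}$ is the adjacency matrix of $D_{p^a q^b}$. I would order the divisors of $p^{a+6} q^b$ by first listing those of $p^a q^b$, then the $6(b+1)$ new divisors $p^{a+i} q^j$ with $1 \leq i \leq 6$ and $0 \leq j \leq b$. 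In this ordering, $M_{a+6, b}$ takes a $2 \times 2$ block form with $M_{a, b}$ in the upper-left corner, a cross block $X$, and a lower-right block $D$ encoding divisibility among the new divisors. The Schur complement formula then gives $\det(M_{a+6, b}) = \det(D) \cdot \det(M_{a, b} - X D^{-1} X^T)$, and the target identity will follow from the two claims $\det(D) \neq 0$ and $X D^{-1} X^T = 0$. By symmetry in $p$ and $q$, an analogous periodicity in $b$ holds, and iterating both reduces any $a, b \equiv 1 \pmod 6$ to the base case.

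The main obstacle is establishing $X D^{-1} X^T = 0$ uniformly in $b$. In the one-exponent case this was verified by computing an explicit $12 \times 12$ inverse, but here $D$ has size $6(b+1) \times 6(b+1)$ and grows with $b$, so a purely numerical verification is not available. The structural fact to exploit is that the new divisors form a rectangular $\{1, \ldots, 6\} \times \{0, \ldots, b\}$ grid in the exponent lattice, and the divisibility relations on this grid, as well as between it and the old divisors, factor as a tensor-like combination of the $p$-direction pattern, which is essentially the $12 \times 12$ block from the previous proposition, and the $q$-direction pattern from the chain $D_{q^b}$. One then argues that the column-sum identities that forced $B D^{-1} C = 0$ in the single-exponent case respect this factorization and can be invoked block by block. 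An alternative route, avoiding any Schur-complement computation, would be to explicitly construct a null vector for $D_{p^a q^b}$ by combining the null vector for $D_{pq}$, which is supported on the antichain $\{(1,0), (0,1)\}$ in exponent coordinates, with additional ``padding'' values dictated by the previous proposition's proof; the main challenge there is verifying the null-vector equation on all $(a+1)(b+1)$ divisors simultaneously.
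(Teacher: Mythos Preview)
Your base case is fine, and reducing to it via a determinant periodicity is a natural plan, but the inductive step has a real gap that you flag yourself and do not close.

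When you split $M_{a+6,b}$ into blocks, the lower-right block $D$ is the adjacency matrix of the compatibility graph on the $6\times(b+1)$ grid of new divisors, namely $D=M_{5,b}$. For the Schur step you need both $\det(M_{5,b})\neq 0$ and $X\,M_{5,b}^{-1}X^{T}=0$, for every $b\equiv 1\pmod 6$. Neither is established, and the ``tensor-like factorization'' you invoke does not go through: the adjacency matrix of the compatibility graph of a product poset is \emph{not} a Kronecker product of the factors' adjacency matrices (the off-diagonal blocks involve the upper-triangular matrix $U$ and its transpose asymmetrically), so the four scalar identities that forced $B D^{-1} C=0$ in the one-exponent case do not lift block-by-block to the $(b+1)\times(b+1)$ matrix identity you now need. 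Even the invertibility of $M_{5,b}$ for $b=7,13,\ldots$ is itself a nontrivial claim not covered by anything proved earlier.

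The paper takes exactly your ``alternative route'': it writes down an explicit null vector. Six basic $(b+1)$-vectors $A,A',B,B',C,C'$, each with a period-$6$ entry pattern, are arranged in a period-$6$ pattern along the $p$-direction to form a vector of length $(a+1)(b+1)$, and the equation $MX=0$ is then checked by a six-case analysis on the residue of the $p$-exponent modulo $6$. So your instinct about the second route is correct, but carrying out that verification \emph{is} the substance of the proof and cannot be left as a remark.
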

\begin{proof} Suppose that     \[
n = p^u q^v \quad \text{with} \quad u \equiv v \equiv 1 \pmod{6}.
\]
List the divisors of \( n \) in the order:
\[
1, q, q^2, \ldots, q^v, p, pq, pq^2, \ldots, pq^v, \ldots, p^u, p^u q, \ldots, p^u q^v.
\]
The adjacency matrix of \( D_n \) is of the form:
\[
M = 
\begin{bmatrix}
V & U & U & \cdots & U \\
U^T & V & U & \cdots & U \\
U^T & U^T & V & \cdots & U \\
\vdots & \vdots & \vdots & \ddots & \vdots \\
U^T & U^T & U^T & \cdots & V
\end{bmatrix},
\]
where
\[
V = 
\begin{bmatrix}
0 & 1 & 1 & \cdots & 1 \\
1 & 0 & 1 & \cdots & 1 \\
1 & 1 & 0 & \cdots & 1 \\
\vdots & \vdots & \vdots & \ddots & \vdots \\
1 & 1 & 1 & \cdots & 0 
\end{bmatrix}_{(v+1)\times(v+1)}
\quad \text{and} \quad
U = 
\begin{bmatrix}
1 & 1 & \cdots & 1 \\
0 & 1 & \cdots & 1 \\
\vdots & \vdots & \ddots & \vdots \\
0 & 0 & \cdots & 1
\end{bmatrix}_{(v+1)\times(v+1)}
\]
\setcounter{MaxMatrixCols}{20}
Consider the following column matrices of size \((v+1) \times 1\):
\[
\begin{aligned}
    A&=\left[\begin{array}{*{15}c}
            0& 1&1& 0 &-1&-1  &\cdots &0 & 1&1& 0&-1&-1 & 0 & 1 \\
           \end{array}\right]^T\\
    A'&=\left[\begin{array}{*{15}c}
           -1& 0& 1&1& 0 &-1  &\cdots &-1&0 & 1&1& 0&-1&-1 & 0  \\
           \end{array}\right]^T.
\end{aligned}
\]
(In $A$, the block $\begin{bmatrix}
     0& 1&1& 0 &-1&-1 
\end{bmatrix}$ is repeated $(v-1)/6$ times. The matrix $A'$ is obtained from $A$ by left-shifting.) Similary, we consider following column matrices of size \((v+1) \times 1\):
\[
\begin{aligned}
       B&=\left[\begin{array}{*{15}c}
           -1& -1& 0& 1 &1 &0 &\cdots &-1& -1& 0& 1 &1 &0 & -1 & -1  \\
           \end{array}\right]^T,\\
        B'&=\left[\begin{array}{*{15}c}
            0& -1&-1& 0 &1&1  &\cdots &0 & -1&-1& 0&1&1 & 0 & -1 \\
           \end{array}\right]^T\,\\
\end{aligned}
\]
and
\[
\begin{aligned}
       C&=\left[\begin{array}{*{15}c}
           1& 0& -1& -1 &0 &1 &\cdots &1& 0& -1& -1 &0 &1 &1 &0  \\
           \end{array}\right]^T,\\
        C'&=\left[\begin{array}{*{15}c}
            1& 1&0& -1 &-1&0  &\cdots &1& 1&0& -1 &-1&0&1&1  \\
           \end{array}\right]^T.
\end{aligned}
\]
Let $X$ be the following column matrix of size $[(u+1)(v+1)]\times 1$:
\[
X=\begin{bmatrix}
A & A' & B & B' & C&C' &\cdots & A & A' & B & B' & C&C' & A & A'
\end{bmatrix}^T.
\]
(In $X$, the block $\begin{bmatrix}
   A & A' & B & B' & C&C' 
\end{bmatrix}$ is repeated $(u-1)/6$ times.)

We claim  $MX=0$. In particular, it shows that $0$ is an eigenvalue of $M$.

We need to show that for every non-negative integers $s,t$ with $s+t=u\equiv 1\pmod 6$, one has
\[
\begin{bmatrix}
    U^T & \cdots &U^T & V &U&\cdots & U
\end{bmatrix} X =0.
\]
Here in the first matrix,  $U^T$ appears $s$ times and $U$ appears $t$ times.

Note that $A+A'+B+B'+C+C'=0$. Hence $U^T(A+A'+B+B'+C+C')=0=U(B+B'+C+C'+A+A')$. We only need to consider the following six cases.

\medskip
\noindent {\bf Case 1}: $s\equiv 0\pmod 6$ and $t\equiv 1\pmod 6$. In this case, it suffices to show that \[\begin{bmatrix}
    V & U 
\end{bmatrix} \begin{bmatrix}
    A\\A'
\end{bmatrix}=0.\]

Let $\begin{bmatrix}
    V & U 
\end{bmatrix} \begin{bmatrix}
    A\\A'
\end{bmatrix}=\begin{bmatrix}s_1\\s_2\\\vdots\end{bmatrix}$. We have
\[
s_1=\sum_{i\geq 2}a_i+ \sum_{i\geq 1}a'_i= 1+(-1)=0.
\]
On the other hand, the difference between the $(i+1)$-st row and the $i$th row of $[V \mid U]$ is the row
\[
\left [\; 0\quad \cdots\quad 0 \quad 1 \quad -1 \quad 0\quad\cdots \quad 0\quad \mid\quad  0\quad\cdots \quad 0\quad -1\quad 0\quad\cdots \quad 0\;\right].
\]
Here, the first $-1$ is in the $i+1$-th position and the second $-1$ is in the $(s+1)+i$-th position. We deduce that
\[
s_{i+1}-s_i=a_i-a_{i+1}-a'_i=0.
\]
Hence $s_i=0$, for every $i$.

\medskip
\noindent {\bf Case 2}: $s\equiv 1\pmod 6$ and $t\equiv 0\pmod 6$. In this case, it suffices to show that \[\begin{bmatrix}
    U^T & V 
\end{bmatrix} \begin{bmatrix}
    A\\A'
\end{bmatrix}=0.\]

Let $\begin{bmatrix}
    U^T & V 
\end{bmatrix} \begin{bmatrix}
    A\\A'
\end{bmatrix}=\begin{bmatrix}s_1\\s_2\\\vdots\end{bmatrix}$. We have
\[
s_1=a_1+ \sum_{i\geq 2}a'_i= 0+0=0.
\]
On the other hand, the difference between the $(i+1)$-st row and the $i$th row of $[U^T \mid V]$ is the row
\[
\left [\; 0\quad \cdots\quad 0 \quad 1  \quad 0\quad\cdots \quad 0\quad \mid\quad  0\quad\cdots \quad 0 \quad  1\quad -1\quad 0\quad\cdots \quad 0\;\right].
\]
Here, the first $1$ is in the $i+1$-th position and the second $1$ is in the $(s+1)+i$-th position. We deduce that
\[
s_{i+1}-s_i=a_{i+1}+a'_i-a'_{i+1}=0.
\]
Hence $s_i=0$, for every $i$.

Using  similar arguments as in the previous two cases, we can show that
\[
VB+UB'= [\;s \quad s\quad\cdots\quad s\;]^T= U^TB+VB',
\]
where $s=\sum_{i\geq 2}b_i+ \sum_{i\geq 1}b'_i=-2=b_1+ \sum_{i\geq 2}b'_i$, and that
\[
\begin{aligned}
&VA'+UB=U^TA'+VB=[-2\quad -2\quad \cdots \quad -2\;]^T,\\
&VB'+UC= U^TB'+VC=0, \\
&VC+UC'=U^TC+VC'=[\;2\quad 2\quad \cdots \quad 2\;]^T.   
\end{aligned}
\]

\medskip
\noindent {\bf Case 3}: $s\equiv 2\pmod 6$ and $t\equiv 5\pmod 6$. In this case, it suffices to show that \[\begin{bmatrix}
    U^T & U^T & V & U & U & U&U&U 
\end{bmatrix} \begin{bmatrix}
    A\\A'\\B\\B'\\C\\C'\\A\\A'
\end{bmatrix}=0.\]

We have
\[
\begin{aligned}
   &U^T(A+A')+VB+U(B'+C+C'+A+A')=-VA'+U^TA' +VB-UB \\
   &= (U^TA'+VB)-(VA'+UB)=0.
\end{aligned}
\]

\medskip
\noindent {\bf Case 4}: $s\equiv 3\pmod 6$ and $t\equiv 4\pmod 6$. In this case, it suffices to show that \[\begin{bmatrix}
    U^T & U^T & U^T & V & U & U & U&U 
\end{bmatrix} \begin{bmatrix}
    A\\A'\\B\\B'\\C\\C'\\A\\A'
\end{bmatrix}=0.\]

We have 
\[
\begin{aligned}
&U^T(A+A'+B) + VB'+U(C+C'+A+A')\\
&=U^T(A+A')+VB+U(B'+C+C'+A+A') +U^TB+VB' -VB -UB' \\
&=U^TB+VB' -(VB +UB')=0.
\end{aligned}
\]

\medskip
\noindent {\bf Case 5}: $s\equiv 4\pmod 6$ and $t\equiv 3\pmod 6$. In this case, it suffices to show that \[\begin{bmatrix}
    U^T & U^T & U^T & U^T&V & U & U & U 
\end{bmatrix} \begin{bmatrix}
    A\\A'\\B\\B'\\C\\C'\\A\\A'
\end{bmatrix}=0.\]

We have 
\[
\begin{aligned}
&U^T(A+A'+B+B') + VC+U(C'+A+A')\\
&=U^T(A+A'+B) + VB'+U(C+C'+A+A') +U^TB'+VC -VB' -UC \\
&=U^TB'+VC  -(VB' +UC)=0.
\end{aligned}
\]

\medskip
\noindent {\bf Case 6}: $s\equiv 5\pmod 6$ and $t\equiv 2\pmod 6$. In this case, it suffices to show that \[\begin{bmatrix}
    U^T & U^T & U^T &U^T &U^T & V & U & U 
\end{bmatrix} \begin{bmatrix}
    A\\A'\\B\\B'\\C\\C'\\A\\A'
\end{bmatrix}=0.\]
We have 
\[
\begin{aligned}
&U^T(A+A'+B+B'+C) + VC'+U(A+A')\\
&=U^T(A+A'+B+B') + VC+U(C'+A+A')+U^TC+VC' - VC -UC' \\
&=U^TC+VC'  -(VC +UC')=0.
\end{aligned}
\]

\end{proof}

\section*{Acknowledgements}
This project was initiated during the workshop on Massey Products in Galois Cohomology, held from June 13 to June 16, 2024, at the University of Ottawa. We gratefully acknowledge the organizers-Professors Stefan Gille, Luisa Liboni and Kirill Zainoulline-for their kind invitation and hospitality. NDT gratefully acknowledges the Vietnam Institute for Advanced Study in Mathematics (VIASM) for hospitality and support during a visit in 2025.

 \end{document}